\documentclass[a4paper,12pt]{amsart}
\usepackage{amsmath,amsfonts,amssymb}
\usepackage{tikz}
\usepackage{multirow}
\usepackage{graphicx}
\usepackage{verbatim}
\usepackage{url}
\usepackage{hyperref}
\usepackage{float}
\floatstyle{plaintop}
\restylefloat{table}
\addtolength{\textwidth}{\oddsidemargin}
\addtolength{\textwidth}{\evensidemargin}
\setlength{\oddsidemargin}{0pt}
\setlength{\evensidemargin}{0pt}
\addtolength{\textheight}{\topmargin}
\setlength{\topmargin}{0pt}
\calclayout
\theoremstyle{plain}
\newtheorem{theorem}{Theorem}[section]
\newtheorem{lemma}[theorem]{Lemma}
\newtheorem{corollary}[theorem]{Corollary}
\newtheorem{proposition}[theorem]{Proposition}

\theoremstyle{definition}
\newtheorem*{definition}{Definition}

\theoremstyle{remark}


\newcommand{\C}{\mathbb{C}}

\newcommand{\Z}{\mathbb{Z}}

\newcommand{\Hy}{\mathbb{H}}
\newcommand{\Oc}{\mathcal{O}}
\newcommand{\Cb}{\mathbb{C}}

\begin{document}

\newpage
\title{On the volume  and the Chern-Simons invariant for the $2$-bridge knot orbifolds}
\author{Ji-Young Ham, Joongul Lee, Alexander Mednykh*, and Aleksey Rasskazov}
\address{Department of Science, Hongik University, 
94 Wausan-ro, Mapo-gu, Seoul,
 04066\\
   Korea} 
\email{jiyoungham1@gmail.com.}

\address{Department of Mathematics Education, Hongik University, 
94 Wausan-ro, Mapo-gu, Seoul,
04066\\
   Korea} 
\email{jglee@hongik.ac.kr}

\address{Sobolev Institute  of Mathematics,  pr. Kotyuga 4, Novosibirsk  630090  \\
Novosibirsk State University,  Pirogova 2, Novosibirsk  630090\\
 Russia.}
\email{mednykh@math.nsc.ru}
\thanks{*The author  was funded by the Russian Science Foundation (grant 16-41-02006).}

\address{Webster International University \\
146 Moo 5, Tambon Sam Phraya, Cha-am, Phetchaburi 76120\\
Tailand}
\email{arasskazov69@webster.edu}

\subjclass[2010]{57M27,57M25.}

\keywords{fundamental set, volume, Chern-Simons invariant, cone-manifold, orbifold, explicit formula, $2$-bridge knot, knot with Conway's notation $C(2n,4)$, Riley-Mednykh polynomial}

\maketitle 

\markboth{ Ji-young Ham, Joongul Lee, Alexander Mednykh, and Aleksey Rasskazov } 
{On the volume  and the Chern-Simons invariant for the $2$-bridge knot orbifolds}
 
\begin{abstract}
We extend some part of the unpublished paper~\cite{MR2} written by Mednykh and Rasskazov. Using the approach indicated in this paper we derive the Riley-Mednykh polynomial for some family of the $2$-bridge knot orbifolds. As a result we obtain explicit formulae for the volume of cone-manifolds and the Chern-Simons invariant of orbifolds of the knot with Conway's notation $C(2n,4)$.
\end{abstract}
\maketitle

\section{Introduction}
By Mostow-Prasad rigidity, the two bridge knot complement has a unique hyperbolic structure if it has. Since the volume is a fundamental invariant, it has been studied quite a lot. 
But explicit volume formulae for hyperbolic cone-manifolds of knots and links are known a little. The volume formulae for hyperbolic cone-manifolds of the knot 
$4_1$~\cite{HLM1,K1,K2,MR1}, the knot $5_2$~\cite{M2}, the link $5_1^2$~\cite{MV1}, 
the link $6_2^2$~\cite{M1}, and the link $6_3^2$~\cite{DMM1} have been computed. In~\cite{HLM2}, a method of calculating the volumes of two-bridge knot cone-manifolds were introduced but without explicit formulae. In~\cite{HMP,HL1}, explicit volume formulae for hyperbolic cone-manifolds of twist knots and knots with Conway's notation $C(2n, 3)$ are computed. In~\cite{HLMR1}, explicit volume formulae for the link $7_3^2 (\alpha, \alpha)$ cone-manifolds are computed. In~\cite{Tran1, Tran2}, explicit volume formulae for double twist knot cone-manifolds and double twist link cone-manifolds are introduced without explicit computations.

Chern-Simons invariant~\cite{CS, Mey1} was defined to be a geometric invariant and became a topological invariant for hyperbolic two-bridge knots after the Mostow Rigidity Theorem~\cite{Mo1}.
Various methods of finding Chern-Simons invariant using ideal triangulations have been 
introduced~\cite{N1,N2,Z1,CMY1,CM1,CKK1} and
 implemented~\cite{SnapPy, Snap}. But, for orbifolds, to our knowledge, there does not exist a single convenient program which computes Chern-Simons invariant.
In~\cite{HLM2} a method of calculating the Chern-Simons invariants of two-bridge knot orbifolds were introduced but without explicit formulae. In~\cite{HL,HL2}, the Chern-Simons invariants of orbifolds of the twist knots and the knots with Conway's notation $C(2n, 3)$ are computed.
Similar approaches for $SU(2)$-connections can be found in~\cite{KK1} and for $\text{\textnormal{SL}}(2,C)$-connections in~\cite{KK2}. Explicit integral formulae for Chern-Simons invariants of the Whitehead link (the two component twist link) orbifolds and their cyclic coverings are presented in~\cite{A,A1}.
For explanations of cone-manifolds, you can refer to~\cite{CHK,T1,K1,P2,HLM1,PW,HMP}.

Let $\Oc(p/q,n)$ denote the orbifold with the singular set the hyperbolic
$2$-bridge knot with the cone-angle $2 \pi/n$. 
The geometry of the orbifolds $\Oc(p/q,n)$ has already been intensively investigated by many
authors. The combinatorial construction of fundamental polyhedra for orbifolds $\Oc(p/q,n)$ was suggested by Minkus \cite{min}.  Later it was discovered by Mednykh and Rasskazov \cite{MR1, MR2} that this topological construction can be successfully realized in hyperbolic, spherical and Euclidean spaces. Further development of this fruitful idea in the spaces of constant curvature was done in \cite{DMM1, KM, VR, Shm}. From \cite{HMTT} the above mentioned construction of the fundamental polyhedra became known as {\it butterfly polyhedra}. In  \cite{MSV} the geometrical ideas from \cite{MR1, MR2} were realized in  the five exotic Thurston geometries $\mathbb{S} \times\mathbb{R}, \mathbb{H}^2\times\mathbb{R},  \mathbb{SL}_2(\mathbb{R}), \mathbb{N}il, \text{  and  } \mathbb{S}ol.$ We use hyperbolic polyhedra in this paper. But, we also introduce Spherical and Euclidean polyhedra here since they come from the same topological construction.

 We derive the Riley-Mednykh polynomial responsible for the geometry of the fundamental polyhedron. Let $X_{q/p}$ be 
 the complement of a two bridge knot in $\mathbb{S}^3$.
 Let $R=\text{Hom}(\pi_1 (X_{q/p}), \text{SL}(2, \C))$. 
Given  a set of generators, $s,t$, of the fundamental group for 
$\pi_1 (X_{q/p})$, we define
 a set $R\left(\pi_1 (X_{q/p})\right) \subset \text{SL}(2, \C)^2 \subset \C^{8}$ to be the set of
 all points $(\rho(s),\rho(t))$, where $\rho$ is a
 representaion of $\pi_1 (X_{q/p})$ into $\text{SL}(2, \C)$. Let $S=\rho(s)$, $T=\rho(t)$ and  and $c \in \text{SL}(2, \C)$ which satisfies $cS=T^{-1}c$ and $c^2=-I$. Since the defining relation of 
 $\pi_1 (X_{q/p})$ gives the defining equation of $R\left(\pi_1 (X_{q/p})\right)$~\cite{R3}, $R\left(\pi_1 (X_{q/p})\right)$ is an affine algebraic set in $\C^{8}$ and can be identified with an algebraic set in $\C^{2}$ because the entries of $S$ and $T$ can be expressed in terms of two variables. The above fundamental polyhedron gives the natural choices for them, $A=\cot{\frac{\alpha}{2}}$ and $V=\cosh{d}$ where $\alpha$ is the cone angle along the knot and $d$ is the distance between two fixed axes of $S$ and $T$. The defining equation of the algebraic set in $\C^{2}$ is $\text{tr}(SWc)$ where $W=\rho(w)$ for $w$ in Proposition~\ref{prop:fundamentalGroup} (see Subsection~\ref{subsec:RM}). Since for any $W$, $\text{tr}(SWc)$ has $\text{tr}(Sc)$ as a factor, we define \emph{Riley-Mednykh polynomial} as $\text{tr}(SWc)/\text{tr}(Sc)$ up to powers of $\sin{\frac{\alpha}{2}}$. $\text{tr}(Sc)$ and $\sin{\frac{\alpha}{2}}$ give reducible representations. We can vary two choices. Then  the Riley-Mednykh polynomial comes from $\text{tr}(SWc)/\text{tr}(Sc)$ but sometimes we need to factor something else out to make it into a polynomial. In~\cite{HMP,HL}, instead of $A$ and $V$, we used $B=\cos{\frac{\alpha}{2}}$ and $V$. And 
 $\text{tr}(SWc)/\text{tr}(Sc)$ turns out to be a polynomial. Hence the Riley-Mednykh polynomial in this case is $\text{tr}(SWc)/\text{tr}(Sc)$. 
 In~\cite{HL1,HL2,HL3}, we used $M=e^{\frac{i \alpha}{2}}$ and $x=2-\text{tr}(ST)$. In this case, the Riley-Mednykh polynomial is $\text{tr}(SWc)/\text{tr}(Sc)$ up to powers of $M$. In~\cite{HLMR1}, we factored out some more and named it to be the Riley-Mednykh polynomial since we were interested in the geometric one.
  
 As an application, we present explicit formulae for the volume of cone-manifolds and the Chern-Simons invariant of orbifolds of the knot with Conway's notation $C(2n,4)$. We used $M$ and $x$ in this case. Among the equivalent knots, we consider $C(2n,4)$ as $K_{\frac{6n+1}{8n+1}}$. Hence the slope of $C(2n,4)$ is 
 $\frac{6n+1}{8n+1}$ (see Section~\ref{sec:knot} for definitions). Instead of working on complicated combinatorics of 3-dimensional ideal tetrahedra to find the volume and the Chern-Simons invariant of the hyperbolic orbifolds of the knot with Conway's notation $C(2n, 4)$, we deal with simple one dimensional singular loci. 
We use the Schl\"{a}fli formula and the Schl\"{a}fli formula for the generalized Chern-Simons function on the family of $C(2n,4)$ cone-manifold structures~\cite{HLM3}. With the normal precision of Mathematica, we could compute the volume and the Chern-Simons invariant of the hyperbolic orbifolds of the knot with Conway's notation $C(2n, 3)$ in~\cite{HL2}, but we couldn't for $C(2n,4)$ as $n$ gets large. In this paper, by elevating the precision to higher degree than the normal in Mathematica, we could  finally compute for $C(2n,4)$ with higher precision. With our Riley-Mednykh polynomial, locating the root corresponding to the geometric structures becomes easy.
\section{$2$-bridge knots} 
\label{sec:knot}

The following theorem gives the classification of the $2$-bridge knots in normal forms. 

\begin{theorem}~\cite{S1}
Let $q$(resp. $q'$) and $p$ (resp. $p'$) be odd coprime integers such that $p>1$ (resp. $p'>1$) and
 $-p<q<p$ (resp. $-p'<q'<p'$). 
\begin{enumerate}
\item The two-bridge knots $K_{q/p}$ and $K_{q'/p'}$ are equivalent if and only if $p'=p$ and $q'=q^{\pm 1}$ (mod $p$).
\end{enumerate}
\end{theorem}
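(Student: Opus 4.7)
The plan is to reduce this classical Schubert theorem to the classification of lens spaces, via the double branched cover construction, which is the standard route.

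First I would recall that to each pair $(q,p)$ of odd coprime integers with $-p<q<p$ and $p>1$, the two-bridge knot $K_{q/p}$ can be presented in Schubert's normal form as the plat closure of a specific $4$-plat built from the continued fraction expansion of $q/p$. The key structural fact I would establish (or cite from Schubert) is that the double cover of $S^{3}$ branched along $K_{q/p}$ is homeomorphic to the lens space $L(p,q)$. This is proved by taking the standard genus-one Heegaard decomposition of $S^{3}$ for which $K_{q/p}$ sits as a $(2,\cdot)$-braid in each handlebody; lifting to the double branched cover glues two solid tori along their boundaries via a map whose matrix is determined by $q/p$, which is exactly the gluing that produces $L(p,q)$.

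For the ``only if'' direction I would argue as follows. Suppose $K_{q/p}$ and $K_{q'/p'}$ are equivalent, i.e.\ there is an orientation-preserving self-homeomorphism of $S^{3}$ carrying one to the other. This lifts to a homeomorphism between their double branched covers, so $L(p,q)\cong L(p',q')$. At this point I would invoke Reidemeister's classification of lens spaces: $L(p,q)$ and $L(p',q')$ are homeomorphic if and only if $p=p'$ and $q'\equiv q^{\pm 1}\pmod{p}$. This gives the required numerical condition. The converse ``if'' direction is the easier half: the case $q'=q$ is trivial, while the case $q'\equiv q^{-1}\pmod p$ is realized by an explicit ambient isotopy of $S^{3}$ that rotates the $4$-plat diagram by a half-turn interchanging the two bridges, which at the level of continued fractions inverts $q/p$.

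The main obstacle is really the lens space classification used in Step~3; a self-contained proof of Reidemeister's theorem requires either Reidemeister torsion or a delicate analysis of incompressible tori/isotopy classes of Heegaard splittings, and this is where all the genuine work lives. By contrast, translating between self-homeomorphisms of $S^{3}$ respecting the branch set and self-homeomorphisms of $L(p,q)$ is formal once one knows that every self-homeomorphism of a lens space (other than a handful of low-order exceptions) is isotopic to one that descends to the base. I would therefore present Steps~1--2 in detail, quote Reidemeister's theorem for Step~3, and handle Step~4 by an explicit picture of the plat diagram.
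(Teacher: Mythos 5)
The paper does not prove this statement at all: it is quoted verbatim from Schubert's classification \cite{S1} and used as a black box, so there is no internal proof to compare yours against. On its own merits, your outline is the standard modern argument (double branched cover $=$ $L(p,q)$, then Reidemeister's lens space classification for ``only if,'' and an explicit symmetry of the Schubert normal form for ``if''), and it is essentially sound; indeed the paper itself records the fact that the $2$-fold branched cover of $K_{q/p}$ is $L(p,q)$ in Section~3. Two points deserve care. First, orientations: the unoriented homeomorphism classification of lens spaces gives $q'\equiv \pm q^{\pm 1}\pmod p$, with an extra sign corresponding to mirror images, so to land exactly on $q'\equiv q^{\pm 1}$ you must fix ``equivalent'' to mean orientation-preserving equivalence of knots and invoke the orientation-preserving version of Reidemeister's theorem, checking that the lifted homeomorphism of covers preserves orientation. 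Second, in the ``if'' direction you should note that the normalization ($p$ odd, $q$ odd, $-p<q<p$) picks out a unique odd representative of each residue class mod $p$ in that interval, so the only cases to realize geometrically are $q'=q$ and $qq'\equiv 1\pmod p$, as you do. Your closing remark about self-homeomorphisms of lens spaces descending to the base is not actually needed for either direction and can be dropped.
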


Recall that $q/p$ of $K_{q/p}$ is called \emph{slope}.
 Figure ~\ref{fig:s} shows the two-bridge knot with slope $5/9$. The diagram in Figure ~\ref{fig:s} can tighten and become the diagram of the right side in Figure ~\ref{fig:knot0}. 

 Figure~\ref{fig:knot0} shows the two equivalent knots.
 
\begin{figure} 
\begin{center}
\resizebox{8cm}{!}{\includegraphics{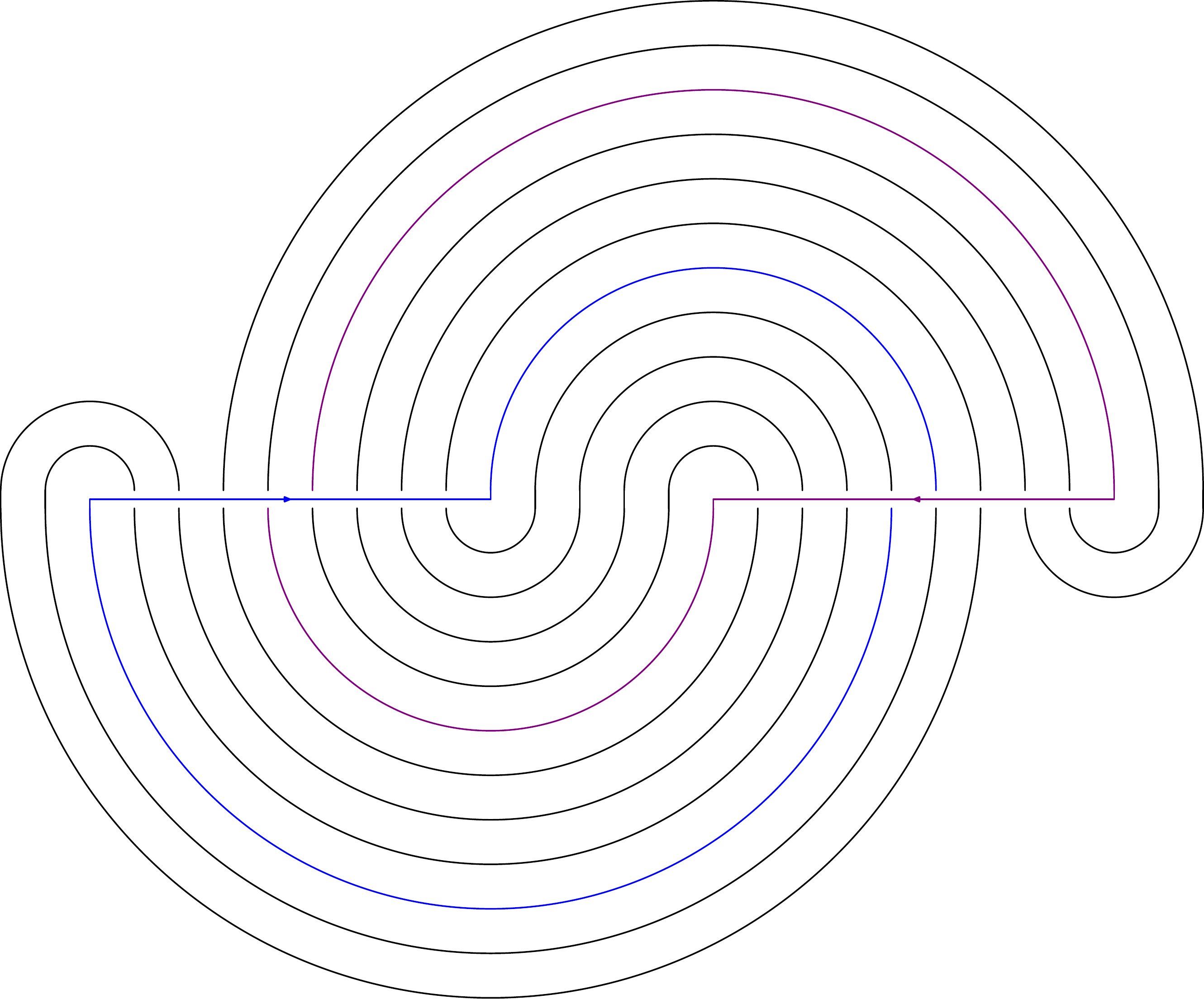}}
\caption{The knot $C(4,2)$ ($6_1$ in the Rolfsen's knot table).}
\label{fig:s}
\end{center} 
\end{figure}

\begin{figure} 
\resizebox{4.5cm}{!}{\includegraphics{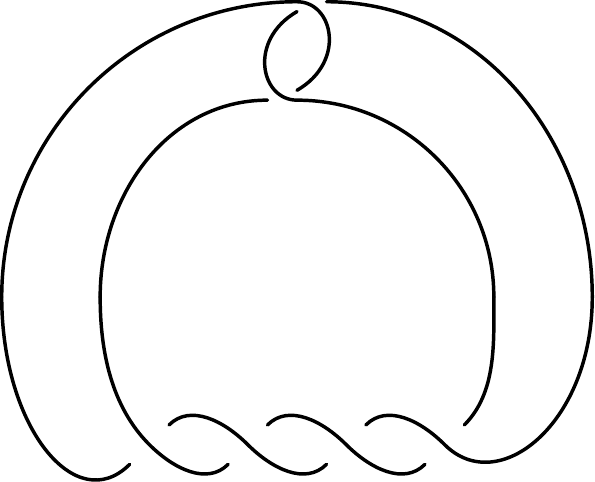}}
\ \ \
\resizebox{6cm}{!}{\includegraphics[angle=90]{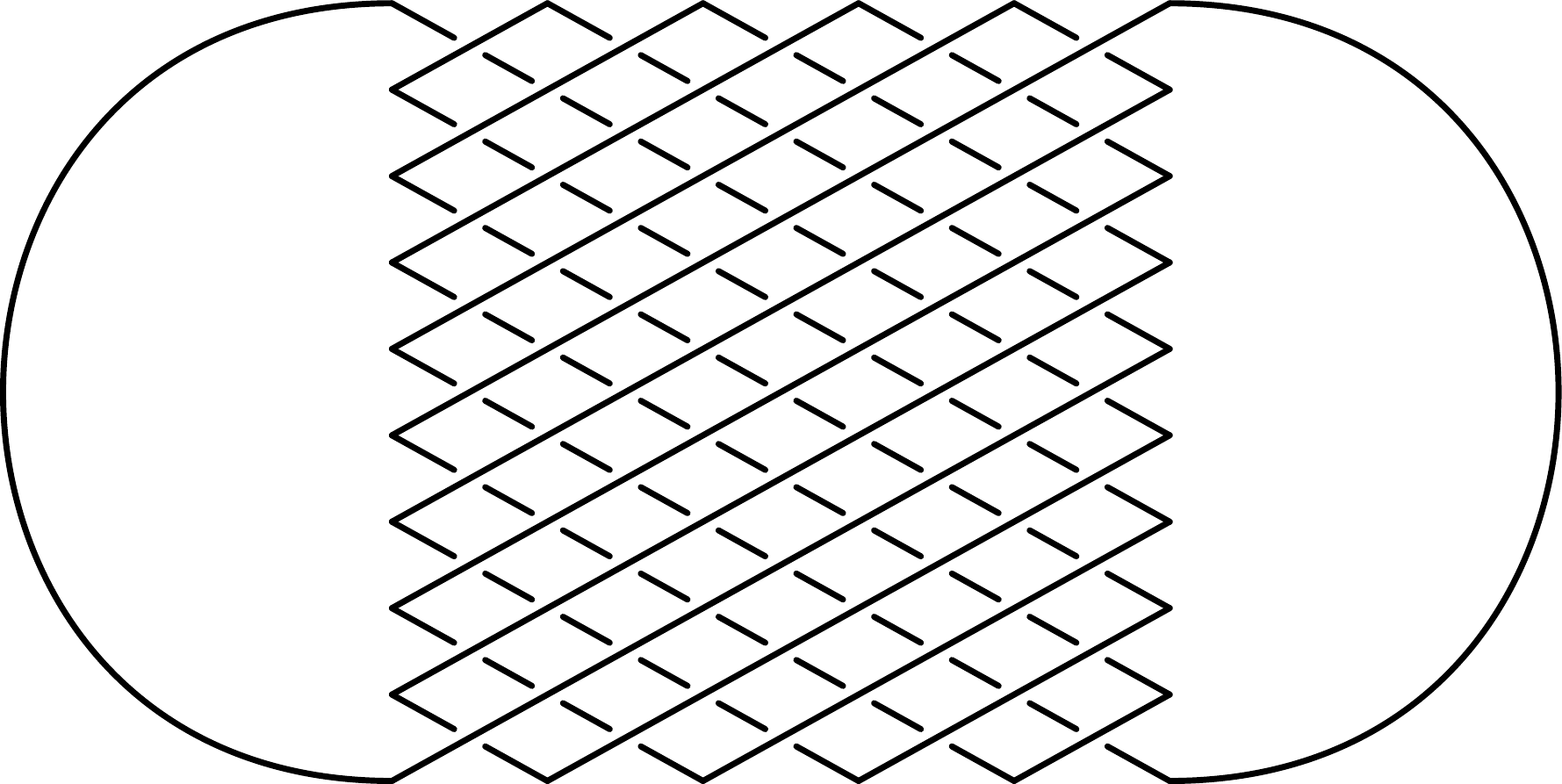}}
\caption{Knot $6_1$ with slope $2/9$ (left) and with slope $5/9$ (right).} \label{fig:knot0}
\end{figure}

\medskip

Let $K_{q/p}$ be a two bridge knot. From the Schubert normal form of $K_{q/p}$, we can read the following fundamental group~\cite{S1,R1}.

\medskip

\begin{proposition}\label{prop:fundamentalGroup}
$$\pi_1(S^3-K)=<s,t \: | \: swt^{-1}w^{-1}=1>$$
where $w=t^{\epsilon_1}s^{\epsilon_2} \cdots t^{\epsilon_{p-2}}s^{\epsilon_{p-1}}$ 
and $\epsilon_j=(-1)^{\lfloor \frac{jq}{p} \rfloor}$, for $j=1, \ldots , p-1$. 
\end{proposition}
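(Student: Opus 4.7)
The plan is to obtain the presentation by applying the Wirtinger algorithm to the Schubert normal form (4-plat diagram) of $K_{q/p}$. In this form the knot consists of two overbridges, which I take as carrying the meridian generators $s$ and $t$, together with $p-1$ undercrossings lying below them. Because every arc of the diagram is conjugate, via a word in $s$ and $t$, to one of the two bridge meridians, two generators will clearly suffice, and it remains only to identify the single relation that arises from closing up the knot.

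First I would parametrize the Schubert normal form by the standard $(p,q)$-data: the $j$-th undercrossing strand is determined by the residue class of $jq$ modulo $p$, and whether that strand approaches the horizontal axis from above or below is recorded by the sign $\epsilon_j = (-1)^{\lfloor jq/p \rfloor}$. Applying the Wirtinger rule at the $j$-th undercrossing (which is of alternating type $s$ or $t$ depending on the parity of $j$) then contributes a factor $t^{\epsilon_j}$ or $s^{\epsilon_j}$ to the conjugating word as one traverses the under-strand. Accumulating these contributions from $j=1$ to $j=p-1$ produces exactly
$$w = t^{\epsilon_1} s^{\epsilon_2} \cdots t^{\epsilon_{p-2}} s^{\epsilon_{p-1}}.$$

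With $w$ in hand, the closing-up condition of the bridge diagram imposes that the meridian $s$, after being conjugated along $w$, must coincide with $t$; that is, $wsw^{-1}=t$, which rearranges to $swt^{-1}w^{-1}=1$. All other Wirtinger relations either are absorbed into the definition of $w$ or become redundant consequences of this single surviving relation, as is standard for 2-bridge presentations. The main obstacle, and the only genuinely combinatorial step, is verifying that the sign convention $\epsilon_j = (-1)^{\lfloor jq/p \rfloor}$ faithfully matches the Wirtinger sign at the $j$-th undercrossing in the Schubert parametrization; this is the content carried out in detail in Schubert~\cite{S1} and Riley~\cite{R1}, and I would follow their bookkeeping. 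Once this sign matching is in place, Tietze reduction of the Wirtinger presentation down to the two generators $s,t$ and the single relation $swt^{-1}w^{-1}=1$ is automatic.
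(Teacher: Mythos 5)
The paper itself gives no proof of this proposition: it is quoted directly from Schubert and Riley (\cite{S1,R1}), with the remark that the presentation is ``read off'' the Schubert normal form. Your sketch is precisely the standard Wirtinger-algorithm argument that those references carry out, including the deferral of the sign bookkeeping $\epsilon_j=(-1)^{\lfloor jq/p\rfloor}$ to the same sources, so in approach you match the paper (and its citations) exactly. One small but real slip: the closing relation you write, $wsw^{-1}=t$, does \emph{not} rearrange to $swt^{-1}w^{-1}=1$; it rearranges to $wsw^{-1}t^{-1}=1$, i.e.\ $s=w^{-1}tw$. The stated relator $swt^{-1}w^{-1}=1$ is equivalent to $sw=wt$, i.e.\ $s=wtw^{-1}$, so the conjugation goes the other way: you need $w^{-1}sw=t$. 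Since $w$ is a palindrome for $2$-bridge knots ($\epsilon_j=\epsilon_{p-j}$, as the paper uses in Lemma~\ref{lem:swn}), the two versions give isomorphic presentations, but they are not literally the same relator, and the direction matters for the later trace computations; you should fix the orientation convention in the Wirtinger traversal so that it produces $sw=wt$.
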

\noindent ($\lfloor a \rfloor$ is the floor of $a$.)

\medskip

A hyperbolic (resp. Euclidean, spherical) $2$-bridge knot \emph{cone-manifold} has as its singular set the hyperbolic $2$-bridge knot $K_{q/p}$ and as its underlying space the three dimensional sphere.  Away from the $2$-bridge knot the cone-manifold is locally isometric to hyperbolic (resp. Euclidean, spherical) three dimensional space and on the $2$-bridge knot it is locally the sector of a cylinder with cone-angle $\alpha$. The special case when $\alpha$ is $2 \pi/n$ for some positive integer $n$, is called an \emph{orbifold}. We denote it by $\Oc(q/p, n)$. The topological canonical fundamental set for  
$\Oc(q/p, n)$ orbifold in the spherical space is well known~\cite{min}.

\medskip

\section{The canonical fundamental set for the orbifold $\Oc(q/p,2)$ in the spherical space.}\label{sec:spherical}
 Note that there exists an angle $\alpha_0 \in [\frac{2\pi}{3},\pi)$ for each hyperbolic $K_{q/p}$ such that the cone-manifold $X_{q/p}(\alpha)$ is hyperbolic for $\alpha \in (0, \alpha_0)$, Euclidean for $\alpha=\alpha_0$, and spherical for $\alpha \in (\alpha_0, \pi]$ \cite{P2,HLM1,K1,PW}. Hence, there is only one spherical orbifold $\Oc(q/p,2)$ for each $K_{q/p}$. The $2$-fold covering of $S^3$ branched along $K_{q/p}$ is the lens space $L(p,q)$~\cite{S1} which is spherical.
The fundamental domain of $L(p,q)$ is described in~\cite[p.237-p.238]{Rol}. The strategy in this section is that constructing the half of the fundamental domain of $L(p,q)$ such that the $2$-fold covering of half of the fundamental domain of $L(p,q)$ branched along $K_{q/p}$ becomes the fundamental domain of $L(p,q)$.
Figure~\ref{fig:polygon} shows the fundamental set for the orbifold $\Oc(3/5,2)$ in the spherical space.

\begin{figure}
\begin{center}
\resizebox{7cm}{!}{\includegraphics{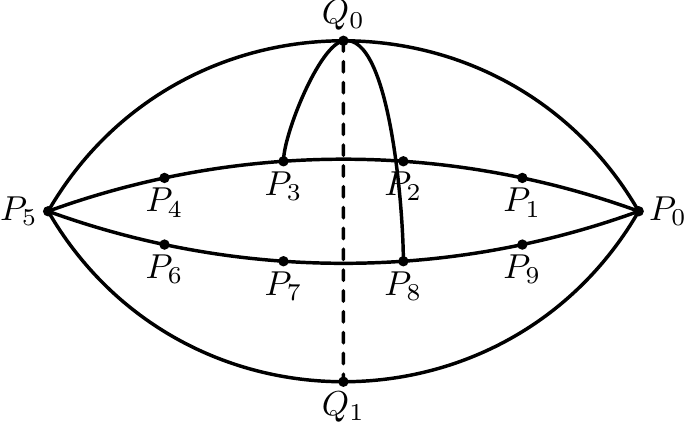}}
\caption{The fundamental set for the orbifold $\Oc(3/5,2)$.}\label{fig:polygon}
\end{center}
\end{figure}

For $K_{q/p}$, we construct a same shape set and let the dihedral angle of between the top lens and the bottom lens be $\pi/p$. Choose $2p$
 points on the circle of intersection of two lenses such that
the spherical distance between two consecutive  points can be $\pi/p$. $K_{q/p}$ is the union of the two lines passing through  $\{P_0,Q_1,P_{p}\}$ and 
$\{P_q,Q_0,P_{p+q}\}$. Let $S$ and $T$ be the half-turns along the two lines passing through  $\{P_0,Q_1,P_{p}\}$ and $\{P_q,Q_0,P_{p+q}\}$, respectively, then 
the group $\langle S,T\rangle$ is a discrete subgroup of the isometry group of $S^3$ and the fundamental set of the group $\langle S,T\rangle$ becomes the
set we constructed. For more details see~\cite{MR2}.

\section{The fundamental set for the Euclidean cone-manifold $X_{q/p}(\alpha)$.} \label{sec:Euclidean}

Let $K_{q/p}$ be the hyperbolic two bridge knot ${q/p}$ and $X_{q/p}$ be the complement of $K_{q/p}$. The fundamental set for the Euclidean orbifold $\Oc(2/5,3)$ is in Figure~\ref{fig:2o5}. The fundamental set for the Euclidean orbifold $\Oc(2/5,3)$ is carefully described in~\cite{MR1}. Note that $\Oc(2/5,3)$ is equivalent to the normal form $\Oc(3/5,3)$. As we mentioned, there exists an angle $\alpha_0 \in [\frac{2\pi}{3},\pi)$ for each $K_{q/p}$ such that the cone-manifold $X_{q/p}(\alpha)$ is hyperbolic for $\alpha \in (0, \alpha_0)$, Euclidean for $\alpha=\alpha_0$, and spherical for $\alpha \in (\alpha_0, \pi]$ \cite{P2,HLM1,K1,PW}. In general, $2 \pi/\alpha$ is not an integer.
For the intermediate angles whose multiples are not $2 \pi$ and not bigger than $\pi$, you can consult a link example in~\cite{Shm}.

\begin{figure} 
\begin{center}
\resizebox{10cm}{!}{\includegraphics{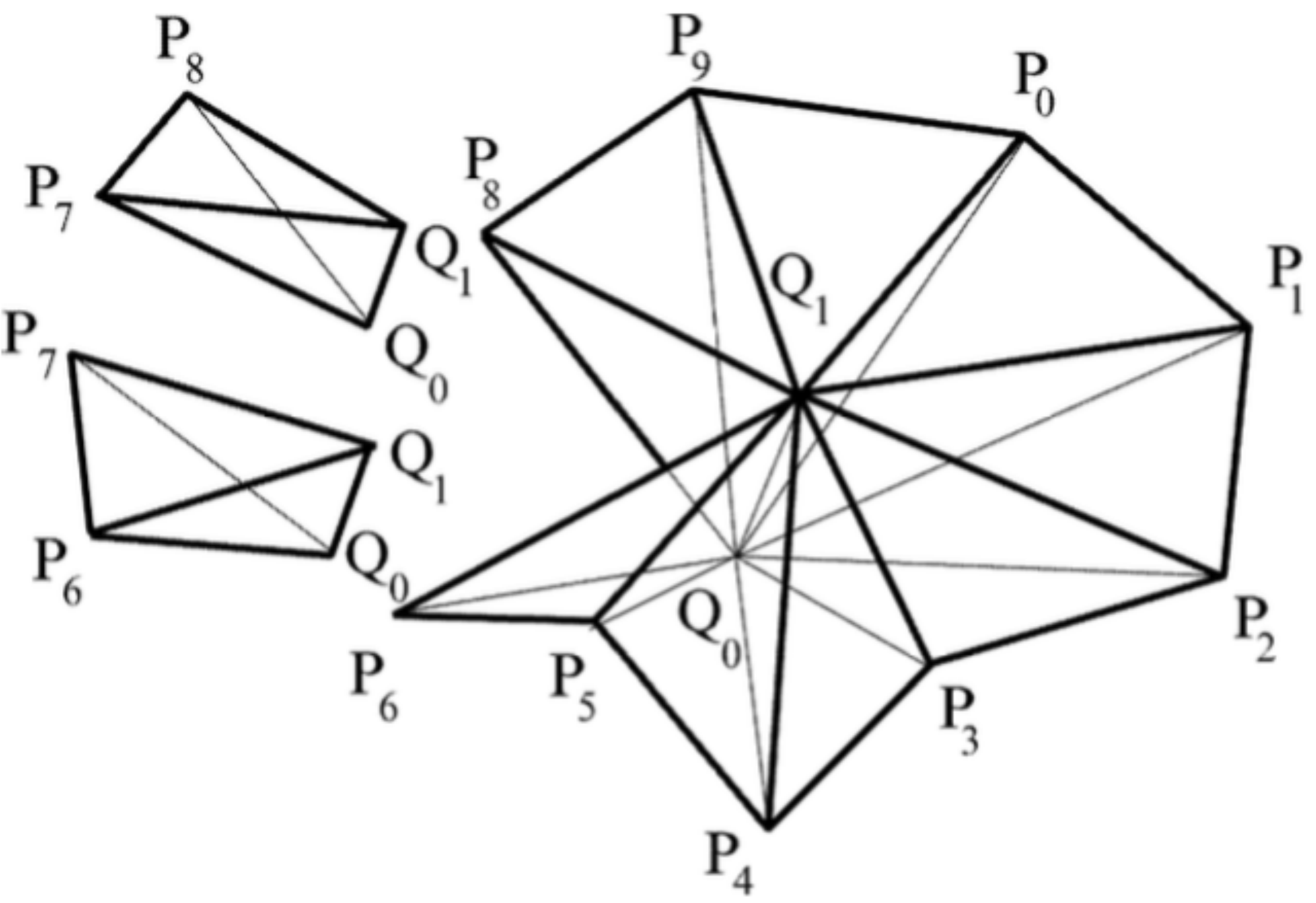}}
\caption{The knot $C(2,2)$ ($4_1$ in the Rolfsen's knot table)}
\end{center} \label{fig:2o5}
\end{figure}

\section{Algorithm for producing the fundamental set of orbifold $\Oc(q/p,n)$ in the hyperbolic space}\label{sec:hyperbolic}
Thurston's orbifold theorem guarantees an orbifold, $\Oc(q/p,2 \pi/\alpha)$, with $K_{q/p}$ as the singular locus and the cone-angle 
$\alpha=2 \pi/k$ for some nonzero integer $k$, can be identified with 
$\Hy^3/\Gamma(n)$ for some $\Gamma(n) \in \text{PSL}(2,C)$; the hyperbolic structure of $X_{q/p}$ is deformed to the hyperbolic structure of 
$X_{q/p}(2 \pi/\alpha)$. Let $e \in \text{PSL(2,C)}$ be the element such that $eS=S^{-1}e$, $eT=T^{-1}e$, and $e^2=I$ and 
$c \in \text{PSL(2,C)}$ be the element such that $cS=T^{-1}c$, and $c^2=I$. For the $\Gamma(n)$ of orbifold $\Oc(q/p,n)$, 
we set $\Gamma(n)=\langle S,T\: |\: S^n=I,T^n=I, (SWc)^2=I \rangle$.
For the fundamental set of $\Gamma(n)$, set the unique fixed point of  $\langle S,SWc \rangle$ to be $P_0$ and the unique fixed point of 
$\langle eSe^{-1},eSWce^{-1} \rangle$ to be $P_{p}$. Then the other points of the fundamental set whose topological type is the same as that of $\Oc(q/p,2)$, can be assigned automatically. For example, $P_{2q}$ is the unique fixed point of $T \langle S,SWc \rangle T^{-1}=\langle TST^{-1},TSWcT^{-1} \rangle$. Now we connect the points with hyperbolic geodesics so that it has the topological type of the fundamental set of $\Oc(q/p,2)$. For more details see~\cite{MR2}.

\section{$\left(PSL(2,C),\mathbb{H}^3 \right) \text{ structure of } X_{q/p}(\alpha)$} \label{sec:poly}
Let $R=\text{Hom}(\pi_1(X_{q/p}), \text{SL}(2, \Cb))$. 
Given  a set of generators, $s,t$, of the fundamental group for 
$\pi_1(X_{q/p})$, we define
 a set $R\left(\pi_1(X_{q/p})\right) \subset \text{SL}(2, \Cb)^2 \subset \Cb^{8}$ to be the set of
 all points $(\rho(s),\rho(t))$, where $\rho$ is a
 representaion of $\pi_1(X_{q/p})$ into $\text{SL}(2, \Cb)$. Since the defining relation of 
 $\pi_1(X_{q/p})$ gives the defining equation of $R\left(\pi_1(X_{q/p})\right)$~\cite{R3}, $R\left(\pi_1(X_{q/p})\right)$ is an affine algebraic set in $\Cb^{8}$. 
$R\left(\pi_1(X_{q/p})\right)$ is well-defined up to isomorphisms which arise from changing the set of generators. We say elements in $R$ which differ by conjugations in $\text{SL}(2, \Cb)$ are \emph{equivalent}. 
A point on the variety gives the $\left(PSL(2,C),\mathbb{H}^3 \right) \text{ structure of } X_{q/p}(\alpha)$. Let $\rho \in R$.

Let
\begin{center}
$$\begin{array}{ccccc}
\rho(s)= \left[\begin{array}{cc}
     \cos \frac{ \alpha}{2} & i e^{\frac{d}{2}} \sin \frac{ \alpha}{2}        \\
      i e^{-\frac{d}{2}} \sin \frac{ \alpha}{2}  &  \cos \frac{ \alpha}{2}  
                     \end{array} \right]                     
\text{,} \ \ \
\rho(t)=\left[\begin{array}{cc}
        \cos \frac{ \alpha}{2} & i e^{-\frac{d}{2}} \sin \frac{ \alpha}{2}      \\
         i e^{\frac{d}{2}} \sin \frac{ \alpha}{2}   &  \cos \frac{ \alpha}{2} 
                 \end{array}  \right].
\end{array}$$
\end{center}

Then $\rho$ becomes an irreducible representation if and only if $A=\cot{\frac{\alpha}{2}}$ and $V=\cosh{d}$ satisfies a polynomial equation~\cite{R3,MR2}. 
We call the defining polynomial 
of the algebraic set $\{(V,A)\}$ as the \emph{Riley-Mednykh polynomial} for the $K_{q/p}$.

\subsection{The Riley-Mednykh polynomial}\label{subsec:RM}
Given the fundamental group of $X_{q/p}$,
$$\pi_1(X_{q/p})=\left \langle s,t \ |\  swt^{-1}w^{-1}=1 \right \rangle,$$
where $w=t^{\epsilon_1}s^{\epsilon_2} \cdots t^{\epsilon_{p-2}}s^{\epsilon_{p-1}}$ 
and $\epsilon_j=(-1)^{\lfloor \frac{jq}{p} \rfloor}$, for $j=1, \ldots , p-1$. 
Let $S=\rho(s),\  T=\rho(t)$ and $W=\rho(w)$. Then the trace of $S$ and the trace of $T$ are both 
$2 \cos \frac{\alpha}{2}$. 

\begin{lemma}\label{lem:swn}
For $c \in \text{\textnormal{SL}}(2, \Cb)$ which satisfies $cS=T^{-1}c$, and $c^2=-I$,
$$SWT^{-1}W^{-1}=-(SWc)^2. $$
\end{lemma}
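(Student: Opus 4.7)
The plan is to show $(SWc)^{2} = -SWT^{-1}W^{-1}$ by pushing the right-hand $c$ through $SW$ using the two conjugation rules that $c$ supplies, and then applying $c^{2}=-I$ at the end. The key intermediate fact I would need is that conjugation by $c$ inverts the word $W$, i.e.\ $cWc^{-1}=W^{-1}$, which is exactly where the palindromic structure of the Schubert word for a $2$-bridge knot enters.

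First I would observe that the hypothesis $cS=T^{-1}c$ (with $c^{2}=-I$, hence $c^{-1}=-c$) gives both
\[
cSc^{-1}=T^{-1}\qquad\text{and, by taking inverses,}\qquad cTc^{-1}=S^{-1}.
\]
Thus conjugation by $c$ realizes the involution that swaps $S\leftrightarrow T$ and simultaneously inverts, so
\[
cWc^{-1}=S^{-\epsilon_{1}}T^{-\epsilon_{2}}\cdots S^{-\epsilon_{p-2}}T^{-\epsilon_{p-1}}
\]
for the word $W=T^{\epsilon_{1}}S^{\epsilon_{2}}\cdots T^{\epsilon_{p-2}}S^{\epsilon_{p-1}}$ of Proposition~\ref{prop:fundamentalGroup}.

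Next I would establish the palindrome identity $\epsilon_{j}=\epsilon_{p-j}$ for $j=1,\dots,p-1$. Since $\gcd(p,q)=1$ and $0<j<p$ one has $jq/p\notin\mathbb{Z}$, so
\[
\left\lfloor\tfrac{(p-j)q}{p}\right\rfloor=q-1-\left\lfloor\tfrac{jq}{p}\right\rfloor,
\]
and because $q$ is odd, $(-1)^{q-1}=1$, giving $\epsilon_{p-j}=\epsilon_{j}$. Comparing the expression for $cWc^{-1}$ above with
\[
W^{-1}=S^{-\epsilon_{p-1}}T^{-\epsilon_{p-2}}\cdots S^{-\epsilon_{2}}T^{-\epsilon_{1}},
\]
the palindrome identity shows these two words agree letter-by-letter, so $cWc^{-1}=W^{-1}$, equivalently $cW=W^{-1}c$.

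With this in hand, the computation collapses quickly:
\[
(SWc)^{2}=SW(cSW)c=SW(T^{-1}cW)c=SWT^{-1}(cW)c=SWT^{-1}W^{-1}c^{2}=-SWT^{-1}W^{-1},
\]
which is the claim. The main obstacle is really the palindrome step; once $cWc^{-1}=W^{-1}$ is known the rest is a two-line manipulation, so I would expect the write-up to spend most of its words on verifying $\epsilon_{j}=\epsilon_{p-j}$ (or citing it, as it is classical for $2$-bridge knots) and on the careful bookkeeping that $c$ swaps $S$ and $T$ while inverting.
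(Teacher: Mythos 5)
Your proof is correct and follows essentially the same route as the paper: push $c$ through $SW$ via $cSc^{-1}=T^{-1}$, $cTc^{-1}=S^{-1}$, identify $cWc^{-1}$ with $W^{-1}$ using the palindrome property $\epsilon_j=\epsilon_{p-j}$, and finish with $c^2=-I$. The only difference is that you verify $\epsilon_j=\epsilon_{p-j}$ directly from the floor-function definition, whereas the paper simply cites Riley for it; your inline verification is correct and makes the argument self-contained.
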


\begin{proof}
 \begin{equation*}
\begin{split}
 (SWc)^2 & =SWcSWc=SWT^{-1}c(T^{\epsilon_1}S^{\epsilon_2}\cdots T^{\epsilon_{p-2}}S^{\epsilon_{p-1}})c \\
              & =SWT^{-1}(S^{-\epsilon_1}T^{-\epsilon_2}\cdots S^{-\epsilon_{p-2}}T^{-\epsilon_{p-1}})c^2 \\
              & =SWT^{-1}(S^{-\epsilon_{p-1}}T^{-\epsilon_{p-2}}\cdots S^{-\epsilon_{2}}T^{-\epsilon_{1}})c^2 \\
              &=-SWS^{-1}W^{-1},
 \end{split}
\end{equation*}   
where the fourth equality comes from $\epsilon_i=\epsilon_{p-i}$~\cite{R1}.
\end{proof}

From the structure of the algebraic set of $R\left(\pi_1(X_{q/p})\right)$ with coordinates $\rho(s)$ and $\rho(t)$ we have the defining equation of 
$R\left(\pi_1(X_{q/p})\right)$. 

\begin{definition}
We define \emph{Riley-Mednykh polynomial} as $\text{\textnormal{tr}}(SWc)/\text{\textnormal{tr}}(Sc)$ up to powers of $\sin{\frac{\alpha}{2}}$. 
\end{definition}

\begin{theorem} ~\cite{MR2} \label{thm:RMpolynomial}
$\rho$ is a representation of $\pi_1(X_{q/p})$ if and only if $V$ is a root of Riley-Mednykh polynomial $P=P(V,A)$. 
\end{theorem}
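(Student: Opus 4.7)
The plan is to translate the statement ``$\rho$ is a representation'' into a polynomial vanishing condition on $V$, in four steps: (i) use the one-relator presentation of $\pi_1(X_{q/p})$; (ii) invoke Lemma~\ref{lem:swn} to rewrite the relator; (iii) apply Cayley--Hamilton to convert the matrix identity into a trace condition; and (iv) identify the resulting trace polynomial, after removing spurious factors, with the Riley--Mednykh polynomial $P(V,A)$.

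More precisely, since $\pi_1(X_{q/p})=\langle s,t \mid swt^{-1}w^{-1}=1\rangle$, the assignment $s\mapsto S$, $t\mapsto T$ extends to a homomorphism into $\mathrm{SL}(2,\Cb)$ if and only if the single relator is sent to $I$, i.e.\ $SWT^{-1}W^{-1}=I$. By Lemma~\ref{lem:swn} this is equivalent to $(SWc)^{2}=-I$. Because $SWc\in\mathrm{SL}(2,\Cb)$, Cayley--Hamilton gives
\[
(SWc)^{2}=\trace(SWc)\,(SWc)-I,
\]
so $(SWc)^{2}=-I$ forces $\trace(SWc)\cdot(SWc)=0$, and since $SWc$ is invertible this collapses to the single scalar equation
\[
\trace(SWc)=0.
\]

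It remains to relate this to $P(V,A)$. Using the prescribed parametrisation of $S$, $T$ (and the corresponding $c$ satisfying $cS=T^{-1}c$, $c^{2}=-I$), a direct expansion expresses $\trace(SWc)$ as a polynomial in the variables $V=\cosh d$ and $A=\cot(\alpha/2)$, with an overall factor of some power of $\sin(\alpha/2)$ and a factor of $\trace(Sc)$; these factors correspond to the degenerate or reducible loci $\sin(\alpha/2)=0$ and the abelian representations picked out by $\trace(Sc)=0$, and they are excluded by the definition of the Riley--Mednykh polynomial $P(V,A)=\trace(SWc)/\trace(Sc)$ (up to powers of $\sin(\alpha/2)$). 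For generic $\alpha\in(0,\pi)$ and for representations that are not abelian, $\trace(SWc)=0$ is therefore equivalent to $P(V,A)=0$, yielding the theorem.

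The step I expect to carry the real work is the polynomial-division argument in the final paragraph: one must check, at the level of matrix entries, that $\trace(Sc)$ and the appropriate power of $\sin(\alpha/2)$ do divide $\trace(SWc)$ and that the quotient is the claimed polynomial in $V$ and $A$. This is a purely algebraic computation---delicate in bookkeeping because $W$ is a length-$(p{-}1)$ word in $S^{\pm1}, T^{\pm1}$ with exponents $\epsilon_{j}=(-1)^{\lfloor jq/p\rfloor}$---but it is made tractable by the palindromic symmetry $\epsilon_{j}=\epsilon_{p-j}$ already exploited in the proof of Lemma~\ref{lem:swn}, and by the fact that the centraliser of $S$ in $\mathrm{SL}(2,\Cb)$ is two-dimensional, which pins down $c$ up to the sign ambiguity that is irrelevant for $(SWc)^{2}$.
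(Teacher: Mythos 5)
Your proposal follows essentially the same route as the paper's own proof: reduce the single relator to $(SWc)^{2}=-I$ via Lemma~\ref{lem:swn}, convert that to $\trace(SWc)=0$, and then identify $P(V,A)$ with $\trace(SWc)/\trace(Sc)$ up to powers of $\sin(\alpha/2)$ after fixing the explicit $c$. The only difference is cosmetic --- you spell out the Cayley--Hamilton justification for the equivalence $(SWc)^{2}=-I \Leftrightarrow \trace(SWc)=0$, which the paper simply asserts, and like the paper you defer the explicit divisibility computation to the definition of $P$.
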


\begin{proof}
Note that $SWS^{-1}W^{-1}=I$, which gives the defining equations of 
$R\left(\pi_1(X_{q/p})\right)$, is equivalent to $(SWc)^2=-I$ in $\text{SL}(2,\Cb)$ 
by Lemma~\ref{lem:swn} and  
$(SWc)^2=-I$ in $\text{SL}(2,\Cb)$ is equivalent to $\text{\textnormal{tr}}(SWc)=0$.

We can find  two $c$'s in $\text{\textnormal{SL}}(2, \Cb)$ which satisfies $cS=T^{-1}c$ and $c^2=-I$ by direct computations. The existence and the uniqueness of the isometry (the involution) which is represented by $c$ are shown in~\cite[p.46]{F}. Since two $c$'s give the same element in 
$\text{\textnormal{PSL}}(2, \Cb)$, we use one of them.
Hence, we may assume
 \begin{center}
$$\begin{array}{cc}
c=\left[\begin{array}{cc}
        0 & -1    \\
        1 & 0
       \end{array}  \right],
\end{array}$$
\end{center}

 \begin{center}
$$\begin{array}{ccccc}
S=\left[\begin{array}{cc}
     \cos \frac{ \alpha}{2} & i e^{\frac{d}{2}} \sin \frac{ \alpha}{2}        \\
      i e^{-\frac{d}{2}} \sin \frac{ \alpha}{2}  &  \cos \frac{ \alpha}{2}  
                     \end{array} \right],                          
\ \ \
T=\left[\begin{array}{cc}
        \cos \frac{ \alpha}{2} & i e^{-\frac{d}{2}} \sin \frac{ \alpha}{2}      \\
         i e^{\frac{d}{2}} \sin \frac{ \alpha}{2}   &  \cos \frac{ \alpha}{2} 
                 \end{array}  \right].          
\end{array}$$
\end{center}

Since $P$ is the defining polynomial of the algebraic set $\{(V,A)\}$ and the defining polynomial of $R\left(\pi_1(X_{q/p})\right)$ corresponding to our choice of 
$\rho(s)$ and $\rho(t)$, $P$ becomes $\text{\textnormal{tr}}(SWc)/\text{\textnormal{tr}}(Sc)$ up to powers of $\sin{\frac{\alpha}{2}}$.
\end{proof}

 
\section{Two bridge knots with  Conway's notation $C(2n, 4)$} \label{sec:C[2n,4]}

\begin{figure} 
\begin{center}
\resizebox{5cm}{!}{\includegraphics{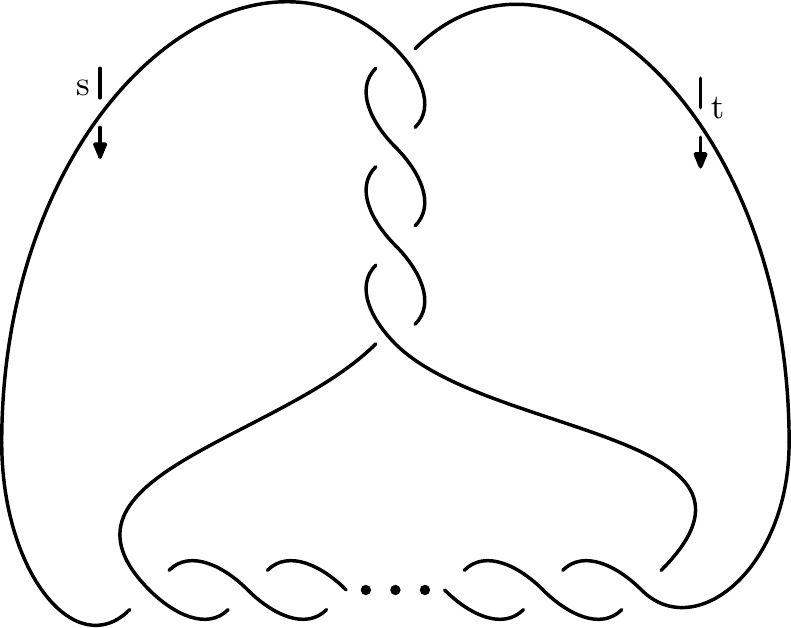}}
\qquad \qquad
\resizebox{5cm}{!}{\includegraphics{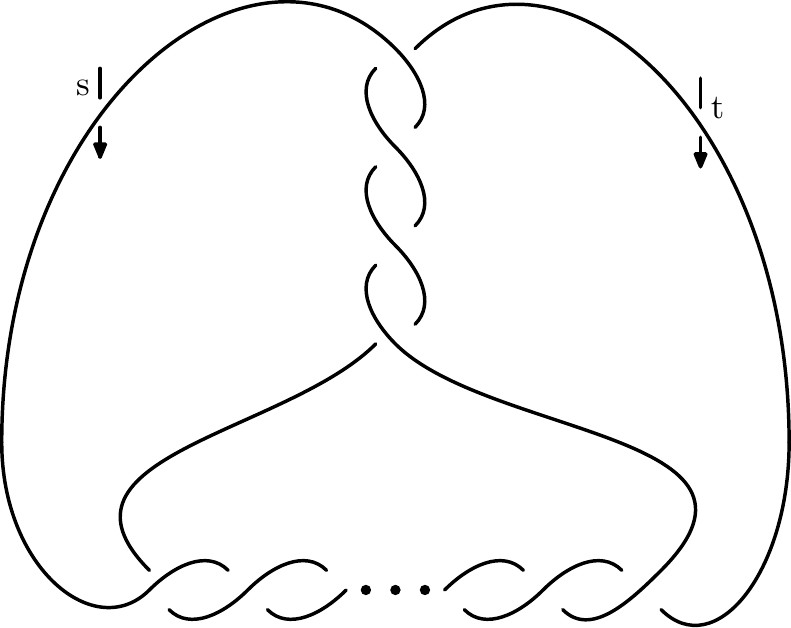}}
\caption{A two bridge knot with Conway's notation $C(2n,4)$  for $n>0$ (left) and for $n<0$ (right)} \label{fig:C[2n,4]}.
\end{center}
\end{figure}

\begin{figure}
\begin{center}
\resizebox{5cm}{!}{\includegraphics{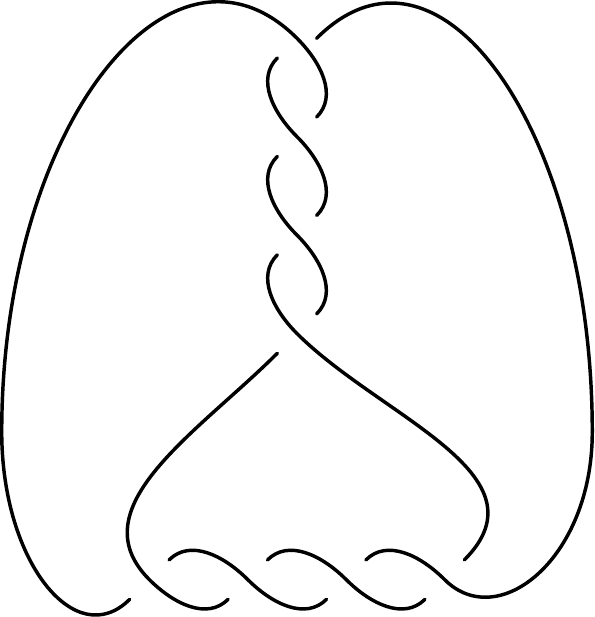}}
\caption{The knot $C(4,4)$ ($8_3$ in the Rolfsen's knot table).}\label{fig:knot}
\end{center}
\end{figure}

A knot is a two bridge knot with Conway's notation $C(2n,4)$ if it has a regular two-dimensional projection of the form in Figure~\ref{fig:C[2n,4]}. For example, Figure~\ref{fig:knot} is the knot $C(2\times 2,4)=C(4,4)$.
It has 4 left-handed vertical crossings and $2 n$ right-handed horizontal crossings ($n$ right-handed horizontal full twists). 
We denote it by $T_{2 n}$. In the rest of the paper, we will prove the following two theorems and the two following corollaries are immediate consequences.

\begin{theorem}\label{thm:main}
Let $X_{2n}(\alpha)$, $0 \leq \alpha < \alpha_0$ be the hyperbolic cone-manifold with underlying space $S^3$ and with singular set $T_{2n}$ of cone-angle 
$\alpha$. Then the volume of $X_{2n}(\alpha)$ is given by the following formula
\begin{align*}
\text{\textnormal{Vol}} \left(X_{2n}(\alpha)\right) 
&= \int_{\alpha}^{\pi} \log \left|\frac{M^{-4}-M^{-2}+(2 M^{-2}+M^{2}-1)x+x^2}{M^{4}-M^{2}+(M^{-2}+2M^{2}-1)x+x^2}\right| \: d\alpha,
\end{align*}

\noindent where 
$x$ with $\text{\textnormal{Im}}(x) \leq 0$ is a zero of the Riley-Mednykh polynomial $P_{2n}=P_{2n}(x,M)$ which is given recursively by 
\medskip
\begin{equation*}
P_{2n} = \begin{cases}
 Q P_{2(n-1)} -M^{12} P_{2(n-2)} \ \text{if $n>1$}, \\
 Q P_{2(n+1)}-M^{12} P_{2(n+2)} \ \text{if $n<-1$},
\end{cases}
\end{equation*}

\medskip
\noindent with initial conditions
\begin{equation*}
\begin{split}
P_{-2}  & =-M^4 x^3-\left(2 M^6-M^4+2 M^2\right) x^2-\left(M^8-M^6+2
   M^4-M^2+1\right) x+M^4,\\
P_{0}  & =M^{-2} \ \text{\textnormal{for}} \ n < 0 \qquad \text{\textnormal{and}} \qquad P_{0} (x,M)  =1 \ \text{\textnormal{for}} \ n > 0,\\    
P_{2}  & =M^6 x^4+\left(3 M^8-M^6+3 M^4\right) x^3+\left(3 M^{10}-2 M^8+5 M^6-2 M^4+3
   M^2\right) x^2\\
   &+\left(M^{12}-M^{10}+2 M^8-2 M^6+2 M^4-M^2+1\right) x+M^6, \\
\end{split}
\end{equation*}
\noindent and $M=e^{\frac{i \alpha}{2}}$ and

\begin{align*}
Q&=M^6 x^4+\left(3 M^8-2 M^6+3 M^4\right) x^3+\left(3 M^{10}-4 M^8+6 M^6-4 M^4+3
   M^2\right) x^2\\
   &+\left(M^{12}-2 M^{10}+3 M^8-4 M^6+3 M^4-2 M^2+1\right) x+2 M^6.
\end{align*}
\end{theorem}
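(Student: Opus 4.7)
The plan is to combine Theorem~\ref{thm:RMpolynomial} for this specific family of $2$-bridge knots with the Schl\"afli volume formula for cone-manifolds. First, I would write out explicitly the word $w_{2n}$ in the relator from Proposition~\ref{prop:fundamentalGroup} for the slope $(6n+1)/(8n+1)$. Since this slope is close to $3/4$ with a perturbation growing linearly in $n$, the exponent sequence $\epsilon_j = (-1)^{\lfloor j(6n+1)/(8n+1)\rfloor}$ has a predictable block structure: a fixed ``boundary'' piece on each end together with $n$ copies of a ``full twist block.'' I would parameterize irreducible representations using $M = e^{i\alpha/2}$ and $x = 2 - \text{tr}(ST)$, writing $S$ and $T$ explicitly as matrices in these variables and using the involution $c$ from the proof of Theorem~\ref{thm:RMpolynomial}.

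For the Riley-Mednykh polynomial, the key observation is that if $w_{2n}$ factors as $u \cdot f^n \cdot v$ for syllables $u, v$ and a twist block $f$, then $W_{2n} = U F^n V$ with $F = \rho(f) \in \text{SL}(2,\mathbb{C})$. Cayley-Hamilton gives $F^n = \text{tr}(F)\, F^{n-1} - F^{n-2}$, so for any fixed matrices $X, Y$ the scalar $\text{tr}(X F^n Y)$ satisfies the second-order recursion $a_n = \text{tr}(F)\, a_{n-1} - a_{n-2}$. Applied to $\text{tr}(S W_{2n} c)$, divided by $\text{tr}(Sc)$, and cleared of the appropriate powers of $\sin(\alpha/2)$ to match the normalization of the Riley-Mednykh polynomial defined before Theorem~\ref{thm:RMpolynomial}, this should yield exactly $P_{2n} = Q\, P_{2(n-1)} - M^{12}\, P_{2(n-2)}$, with $Q$ being $\text{tr}(F)$ rewritten in $(M,x)$ and the $M^{12}$ emerging from the rescaling. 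The initial polynomials $P_{-2}, P_0, P_2$ follow by direct symbolic computation for $n = -1, 0, 1$.

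For the volume, I would apply the Schl\"afli formula $\tfrac{d\,\text{Vol}}{d\alpha} = -\tfrac{1}{2}\ell(\alpha)$, where $\ell(\alpha)$ is the length of the singular geodesic. Writing the longitude $\mu$ as a word in $s, t$ and computing $\text{tr}(\rho(\mu))$ gives a characteristic polynomial whose eigenvalues $\lambda^{\pm 1}$ satisfy $\ell(\alpha) = 2\log|\lambda|$. A direct calculation should express this eigenvalue as a ratio of two quadratics in $x$ over $\mathbb{C}(M)$, consistent with the symmetry $M \leftrightarrow M^{-1}$ between numerator and denominator in the stated integrand; indeed, the numerator is obtained from the denominator by the substitution $M \mapsto M^{-1}$. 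Integrating from $\alpha$ to $\pi$, at which point $X_{2n}(\pi)$ is the spherical orbifold of Section~\ref{sec:spherical} with vanishing hyperbolic volume, then completes the formula. The branch choice $\text{Im}(x) \leq 0$ selects the geometric representation, and is confirmed by continuity from the known complete structure at $\alpha = 0$.

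The main obstacle will be twofold. Algebraically, identifying the exact factorization $w_{2n} = u f^n v$ so that the recursion coefficients emerge in the clean form $Q$ and $M^{12}$ (rather than in a messier conjugate form) requires care in choosing where to split the relator word; an inelegant splitting will give a correct but unrecognizable recursion. Geometrically, the subtle step is matching the Schl\"afli integrand to the particular rational form stated: this requires computing $\lambda$ not as an abstract eigenvalue but as the explicit ratio $P(x,M^{-1})/P(x,M)$ with $P(x,M) = M^4 - M^2 + (M^{-2}+2M^2 -1)x + x^2$, and verifying that the root with $\text{Im}(x) \leq 0$ remains the geometric branch on the entire hyperbolic interval $(0, \alpha_0)$. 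The remaining work consists of direct, if tedious, symbolic computations.
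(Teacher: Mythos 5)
Your overall route coincides with the paper's: parameterize by $M=e^{i\alpha/2}$ and $x=2-\trace(ST)$, get the recursion for $P_{2n}$ from Cayley--Hamilton applied to $\trace(SU^nc)$ (the paper's Proposition~\ref{thm:fundamentalGroup}, taken from Hoste--Shanahan, already gives $w=(ts^{-1}ts^{-1}t^{-1}st^{-1}s)^n$ as a pure $n$-th power, so your worry about boundary syllables $u,v$ and where to split the relator does not arise; also the normalization here is by powers of $M$, namely $M^{6n}$ resp.\ $M^{6n-2}$, not by powers of $\sin\frac{\alpha}{2}$), compute $L=\rho(l)_{11}$ via the relation $u_{21}L+\tilde u_{21}=0$, and integrate the Schl\"afli formula. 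That is exactly what the paper does.

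There is, however, one step in your volume argument that would fail as written. You justify the upper limit $\pi$ by saying that $X_{2n}(\pi)$ is the spherical orbifold ``with vanishing hyperbolic volume.'' But the Schl\"afli formula you are integrating is the one for hyperbolic cone-manifolds, and the hyperbolic structure exists only on $(0,\alpha_0)$ with $\alpha_0\in[\frac{2\pi}{3},\pi)$; the hyperbolic volume degenerates to $0$ at the Euclidean angle $\alpha_0$, not at $\pi$. So the honest output of Schl\"afli is $\text{Vol}(X_{2n}(\alpha))=\int_{\alpha}^{\alpha_0}\log|L|\,d\alpha$. To replace $\alpha_0$ by $\pi$ one must separately show that the integrand vanishes identically on $[\alpha_0,\pi]$: on that interval the structure is spherical, the characters are real (cf.\ the proof of Proposition 6.4 of the Porti--Weiss reference used in the paper), hence $\text{Im}(x)=0$ and, by the identity $|L|^2=\frac{a-\text{Im}(x)b}{a+\text{Im}(x)b}$, $|L|=1$ and $\log|L|=0$. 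Without this observation the stated formula with upper limit $\pi$ is unjustified, and the claim that $\alpha_0$ is located as a zero of the discriminant of $P_{2n}$ (where $x$ and $\bar x$ collide into a real double root) is also part of making this rigorous. A secondary, smaller point: you identify the geometric branch ``by continuity from $\alpha=0$,'' whereas the paper pins it down at $\alpha=\pi$ as the component through $x_{1,2}=2-2\cos\bigl(\frac{\pi(-2n\pm1)}{8n+1}\bigr)$ using Theorem 2.1 of Hilden--Lozano--Montesinos; either works, but you should say how continuity is tracked across all of $(0,\alpha_0)$, since the sign condition $\text{Im}(x)\le 0$ alone is equivalent to $|L|\ge 1$ and that equivalence is what actually needs to be checked.
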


The proof of Theorem~\ref{thm:main} is in Subsection~\ref{sec:proof}. From Theorem~\ref{thm:main}, the following corollary can be obtained. The following corollary gives the hyperbolic volume of the $k$-fold cyclic covering over $T_{2n}$, $M_k (X_{2n})$, for $k \geq 3$.
\begin{corollary}
The volume of $M_k (X_{2n})$ is given by the following formula

\begin{align*}
\text{\textnormal{Vol}} \left(M_k (X_{2n}))\right) &=k \, \left[\text{\textnormal{Vol}} \left(X_{2n}\left( \frac{2 \pi}{k}\right)\right)\right].
\end{align*}
\end{corollary}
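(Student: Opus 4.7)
The plan is to realize the $k$-fold cyclic cover $M_k(X_{2n})$ as a Riemannian covering of the hyperbolic orbifold $\mathcal{O}(T_{2n}, k) = X_{2n}(2\pi/k)$ and then invoke the multiplicativity of hyperbolic volume under finite covers, together with the formula already established in Theorem~\ref{thm:main}.

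First I would note that by Thurston's orbifold theorem (the same tool cited in Section~\ref{sec:hyperbolic}), the cone-manifold structure $X_{2n}(2\pi/k)$ is a hyperbolic orbifold whose underlying space is $S^3$ and whose singular locus is $T_{2n}$ with cone angle exactly $2\pi/k$. The $k$-fold cyclic covering $M_k(X_{2n}) \to S^3$ branched along $T_{2n}$ is determined by the kernel of the abelianization map $\pi_1(S^3 \setminus T_{2n}) \to \mathbb{Z} \to \mathbb{Z}/k\mathbb{Z}$, and under this cover each meridional loop of cone angle $2\pi/k$ lifts to a loop of total angle $2\pi$. Consequently, the branch set lifts to a genuine non-singular geodesic link in $M_k(X_{2n})$, so $M_k(X_{2n})$ inherits a smooth hyperbolic structure from $X_{2n}(2\pi/k)$ and the branched covering map becomes a locally isometric Riemannian covering of degree $k$.

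Next I would apply the elementary fact that for a Riemannian covering $\pi \colon \widetilde{M} \to M$ of finite degree $k$ between hyperbolic manifolds (or orbifolds), $\text{Vol}(\widetilde{M}) = k \cdot \text{Vol}(M)$. Applied to $\pi \colon M_k(X_{2n}) \to X_{2n}(2\pi/k)$, this yields
\[
\text{Vol}(M_k(X_{2n})) = k \cdot \text{Vol}\!\left(X_{2n}\!\left(\tfrac{2\pi}{k}\right)\right),
\]
and the right-hand side is the expression provided by Theorem~\ref{thm:main} evaluated at $\alpha = 2\pi/k$. This establishes the corollary.

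The only delicate point is justifying that $X_{2n}(2\pi/k)$ is indeed hyperbolic for $k \geq 3$, i.e.\ that $2\pi/k < \alpha_0$. Since $\alpha_0 \in [\tfrac{2\pi}{3}, \pi)$, the inequality $2\pi/k \leq 2\pi/3 \leq \alpha_0$ holds for every $k \geq 3$, so the hyperbolic cone-manifold structure used in Theorem~\ref{thm:main} is available throughout this range, and the corollary follows with no further work.
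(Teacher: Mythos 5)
Your proof is correct and matches the argument the paper intends (the paper states this corollary without proof, as an immediate consequence of Theorem~\ref{thm:main}): the $k$-fold cyclic branched cover is a genuine degree-$k$ Riemannian covering of the hyperbolic orbifold $X_{2n}\left(\tfrac{2\pi}{k}\right)$, so the volume multiplies by $k$. The only caveat is that your inequality $2\pi/k \leq 2\pi/3 \leq \alpha_0$ is non-strict when $k=3$, while hyperbolicity requires $2\pi/k < \alpha_0$; this is harmless here since the computed values of $\alpha_0$ for the knots $C(2n,4)$ all exceed $2\pi/3$ strictly, but strictly speaking one should add the hypothesis (as the paper does for the Chern--Simons analogue in Theorem~\ref{thm:main1}) that the relevant cone-manifold is hyperbolic.
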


\begin{theorem}\label{thm:main1}
Let $X_{2n}(\alpha)$, $0 \leq \alpha < \alpha_0$ be the hyperbolic cone-manifold with underlying space $S^3$ and with singular set $T_{2n}$ of cone-angle 
$\alpha$. Let $k$ be a positive integer such that $k$-fold cyclic covering of $X_{2n}(\frac{2 \pi}{k})$ is hyperbolic. Then the Chern-Simons invariant of 
$X_{2n}(\frac{2 \pi}{k})$ (mod $\frac{1}{k}$ if $k$ is even or mod $\frac{1}{2k}$ if $k$ is odd) is given by the following formula:

\begin{align*}
&\text{\textnormal{cs}} \left(X_{2n} \left(\frac{2 \pi}{k} \right)\right) 
 \equiv \frac{1}{2} \text{\textnormal{cs}}\left(L(8n+1,6n+1) \right) \\
&+\frac{1}{4 \pi^2}\int_{\frac{2 \pi}{k}}^{\alpha_0} Im \left(2*\log \left(-M^{-2}\frac{M^{-4}-M^{-2}+(2 M^{-2}+M^{2}-1)x+x^2}{M^{4}-M^{2}+(M^{-2}+2M^{2}-1)x+x^2}\right)\right) \: d\alpha \\
& +\frac{1}{4 \pi^2}\int_{\alpha_0}^{\pi}
 Im \left(\log \left(-M^{-2}\frac{M^{-4}-M^{-2}+(2 M^{-2}+M^{2}-1)x_1+x_1^2}{M^{4}-M^{2}+(M^{-2}+2M^{2}-1)x_1+x_1^2}\right) \right) \: d\alpha\\
 &+\frac{1}{4 \pi^2}\int_{\alpha_0}^{\pi} Im \left( \log \left(-M^{-2}\frac{M^{-4}-M^{-2}+(2 M^{-2}+M^{2}-1)x_2+x_2^2}{M^{4}-M^{2}+(M^{-2}+2M^{2}-1)x_2+x_2^2}\right)\right) \: d\alpha,
\end{align*}

\noindent where 
for $M=e^{\frac{i \alpha}{2}}$, $x$ $(Im(x) \leq 0)$, $x_1$, and $x_2$ are zeroes of Riley-Mednykh polynomial $P_{2n}=P_{2n}(x,M)$. 
For $M=e^{\frac{i \alpha}{2}}$ and $x_1$ and $x_2$ approach common $x$ 
as $\alpha$ decreases to $\alpha_0$ and they come from the components of $x$ and $\overline{x}$.
\end{theorem}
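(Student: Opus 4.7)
The plan is to mimic the strategy from~\cite{HL,HL2} where the Chern--Simons invariant for $C(2n,3)$-orbifolds was treated. The central ingredient is the Schl\"afli formula for the generalized Chern--Simons function on a one-parameter family of cone-manifold structures, established in~\cite{HLM3}: if $\Lambda(\alpha)$ denotes a suitably normalized eigenvalue of the holonomy of the preferred longitude as a function of the cone angle $\alpha$, then
\[
\frac{d\,\text{cs}(X_{2n}(\alpha))}{d\alpha} \;=\; \frac{1}{4\pi^2}\,\text{Im}\bigl(\log \Lambda(\alpha)\bigr) \pmod{\Z}.
\]
So the argument is an integration argument: I would integrate this identity from $\alpha = 2\pi/k$ up to $\alpha=\pi$, and then identify the boundary value at $\alpha=\pi$ with $\tfrac12\,\text{cs}(L(8n+1,6n+1))$ via the $2$-fold branched covering discussed in Section~\ref{sec:spherical}.

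First I would use Proposition~\ref{prop:fundamentalGroup} with slope $q/p=(6n+1)/(8n+1)$ to write the Riley word $w$ for $C(2n,4)$, and with the matrix normalization from Section~\ref{sec:poly} compute $W=\rho(w)$ and the longitude matrix $\Lambda=\rho(\lambda)$, where $\lambda$ is the canonical longitude in terms of $s,t,w$. A direct computation (using only the defining relation $SWT^{-1}W^{-1}=I$) should show that the eigenvalue of $\rho(\lambda)$ associated with the chosen meridian eigenvector equals, up to a sign and a power of $M$, the rational expression
\[
-M^{-2}\,\frac{M^{-4}-M^{-2}+(2M^{-2}+M^{2}-1)x+x^{2}}{M^{4}-M^{2}+(M^{-2}+2M^{2}-1)x+x^{2}},
\]
where $x=2-\text{tr}(ST)$ is the variable from Theorem~\ref{thm:RMpolynomial}. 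This identification is exactly the analogue of what was done for twist knots and $C(2n,3)$ in~\cite{HL,HL2}, so the computation is mechanical given the explicit $W$.

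Next, I would split the integration interval $[2\pi/k,\pi]$ at the Euclidean angle $\alpha_{0}$. On the hyperbolic side $(0,\alpha_{0})$ the relevant root $x(\alpha)$ of $P_{2n}$ is non-real with $\text{Im}(x)\le 0$; its complex conjugate $\overline{x}$ is the other geometric root and yields the complex-conjugate longitude eigenvalue. Summing both contributions (and using that the real part of $\log$ cancels against its conjugate while the imaginary parts add) produces the factor $2$ in front of $\log$ in the first integral of the statement. On the spherical side $(\alpha_{0},\pi]$ the two geometric roots separate into two real roots $x_{1},x_{2}$ of $P_{2n}$, which are the analytic continuations through the Euclidean angle of $x$ and $\overline{x}$ respectively. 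Each contributes its own integrand without the factor $2$, explaining the two separate spherical integrals. Finally, at the endpoint $\alpha=\pi$ the underlying $2$-bridge $2$-orbifold $\mathcal{O}(q/p,2)$ is double-covered by the lens space $L(8n+1,6n+1)$ (Section~\ref{sec:spherical}), and by the naturality of Chern--Simons under finite coverings $\text{cs}(X_{2n}(\pi))=\tfrac12\,\text{cs}(L(8n+1,6n+1))$ modulo the stated ambiguity in $\tfrac1k$ or $\tfrac1{2k}$.

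The main obstacle is controlling the analytic continuation at $\alpha_{0}$. At the Euclidean angle the two geometric roots of $P_{2n}$ collide, so $\log\Lambda$ is ramified there; I would have to verify that the pair $(x_{1},x_{2})$ on the spherical side is the correct branch continuation of $(x,\overline{x})$ on the hyperbolic side so that the three pieces fit together to give a well-defined real number, and that the Schl\"afli identity of~\cite{HLM3} applies uniformly across the hyperbolic, Euclidean, and spherical regimes. A secondary bookkeeping issue is pinning down the exact power of $M$ normalizing $\Lambda$ and the mod-$\tfrac1k$ (or mod-$\tfrac1{2k}$) ambiguity, which comes from the indeterminacy of the longitude framing and of $\tfrac12 \text{cs}(L(p,q))$ with $p$ odd, respectively; once the Euclidean transition is handled, these are consistency checks rather than new obstacles.
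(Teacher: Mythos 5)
Your proposal follows essentially the same route as the paper: integrate the Schl\"afli identity for the generalized Chern--Simons function (Theorem 1.2 of \cite{HLM2}, restated as Theorem~\ref{theorem:schlafli}) from $2\pi/k$ to $\pi$, split the integral at the Euclidean angle $\alpha_0$, express $\beta$ through the longitude eigenvalue of Theorem~\ref{thm:longitude}, and evaluate the endpoint as $\tfrac12\,\text{cs}\left(L(8n+1,6n+1)\right)$ via the $2$-fold branched covering. The one loose phrase is that substituting $\overline{x}$ into the formula for $L$ yields $\overline{L}^{-1}$ rather than $\overline{L}$ (by Equality~(\ref{equ:absL}) it is the eigenvalue of modulus $\leq 1$), which is precisely why the real parts of the logarithms cancel while the imaginary parts add to give the factor $2$, exactly as you conclude.
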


The proof of Theorem~\ref{thm:main1} is in Subsection~\ref{sec:proof1}. From Theorem~\ref{thm:main1}, the following corollary can be obtained. The following corollary gives the Chern-Simons invariant of the $k$-fold cyclic covering over $T_{2n}$, $M_k (X_{2n})$, for $k \geq 3$.
\begin{corollary}
The Chern-Simons invariant of $M_k (X_{2n})$ is given by the following formula
\begin{align*}
\text{\textnormal{cs}} \left(M_k (X_{2n}))\right) &=k \, \left[\text{\textnormal{cs}} \left(X_{2n}\left(\frac{2 \pi}{k}\right)\right)\right].
\end{align*}
\end{corollary}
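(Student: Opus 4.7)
The strategy is to identify $M_k(X_{2n})$ as a Riemannian $k$-fold orbifold covering of $X_{2n}(2\pi/k)$ and to invoke the multiplicativity of the Chern-Simons density under local isometries. First I would recall that the $k$-fold cyclic cover $M_k(X_{2n})$ of $S^3$ branched along $T_{2n}$ is topologically the underlying space of an orbifold cover $\pi\colon M_k(X_{2n}) \to X_{2n}(2\pi/k)$ of degree $k$; away from the singular locus, $\pi$ is a genuine $k$-to-$1$ covering map, while along the branch curve the local model $z \mapsto z^k$ precisely converts a $2\pi$-cone neighbourhood in the cover into a $2\pi/k$-cone neighbourhood in the quotient. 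By Thurston's orbifold theorem, together with the uniformization used to establish Theorem~\ref{thm:main1}, the hyperbolic structure on $X_{2n}(2\pi/k)$ pulls back uniquely to the hyperbolic structure on $M_k(X_{2n})$, and $\pi$ becomes a local isometry off the (measure-zero) singular set. Mostow rigidity then forces the deck action of $\Z/k$ to be by orientation-preserving hyperbolic isometries.

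Next I would invoke the Schl\"afli-type expression for $\text{cs}$ derived in~\cite{HLM3} and employed to obtain the formula of Theorem~\ref{thm:main1}: the Chern-Simons invariant is computed as the integral of a local geometric density (built from the $\text{PSL}(2,\C)$ holonomy representation) over a fundamental set, with the cone-angle variation playing the role of a boundary defect. Since $\pi$ is a degree-$k$ local isometry, a fundamental domain for $M_k(X_{2n})$ decomposes into $k$ isometric copies of a fundamental domain for $X_{2n}(2\pi/k)$; the Chern-Simons density is intrinsic to the metric, so it pulls back identically on each sheet, and integration gives exactly the factor $k$:
\begin{equation*}
\text{cs}\bigl(M_k(X_{2n})\bigr) \;=\; k\,\text{cs}\Bigl(X_{2n}\bigl(\tfrac{2\pi}{k}\bigr)\Bigr).
\end{equation*}
Equivalently, this can be read off directly from the right-hand side of Theorem~\ref{thm:main1}, whose integrals depend only on the hyperbolic structure on $X_{2n}(2\pi/k)$ and scale linearly under a local-isometric finite cover.

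Finally I would check compatibility of indeterminacies. The closed hyperbolic manifold $M_k(X_{2n})$ has $\text{cs}$ well defined modulo $\Z$, whereas Theorem~\ref{thm:main1} determines $\text{cs}(X_{2n}(2\pi/k))$ only modulo $\tfrac{1}{k}$ (or $\tfrac{1}{2k}$ for odd $k$). Multiplying by $k$ carries the orbifold indeterminacy into $\Z$ (respectively $\tfrac{1}{2}\Z$), matching the ambient ambiguity on the cover once the base framing $\text{cs}(L(8n+1,6n+1))$ is fixed. The main obstacle will be the last point: carefully aligning the normalization conventions and framing choices between the cone-manifold Chern-Simons invariant defined via the Schl\"afli formula of~\cite{HLM3,HL,HL2} and the standard closed-manifold invariant of Meyerhoff~\cite{Mey1}, so that the identity holds as an equation of real numbers (not merely modulo the smaller ambiguity) and so that orientation-preservation of the $\Z/k$ deck action feeds through with the correct sign.
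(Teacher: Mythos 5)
Your proposal is correct and follows the same route the paper intends: the paper states the corollary without proof as an immediate consequence of Theorem~\ref{thm:main1}, relying on exactly the multiplicativity you describe --- the $k$-fold cyclic branched cover is a degree-$k$ local isometry onto the orbifold $X_{2n}(2\pi/k)$ away from the singular set, so the Chern--Simons form $Q$ (and the framing/torsion corrections, as worked out in~\cite{HLM2,HLM3}) integrates to $k$ times the orbifold value, with the mod-$\frac{1}{k}$ (resp.\ mod-$\frac{1}{2k}$) ambiguity scaling to the ambiguity of the closed manifold. Your closing caveat about aligning normalizations with~\cite{HLM2} is precisely the content the paper delegates to that reference.
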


In~\cite[Proposition 1]{R1}, the fundamental group of two-bridge knots is presented. We will use the fundamental group of $X_{2n}$ in~\cite{HS}. In~\cite{HS}, the fundamental group of $X_{2n}$ is calculated with 4 right-handed vertical crossings as positive crossings instead of 4 left-handed vertical crossings. The following proposition is tailored to our purpose. The reduced word relation of the one in the following proposition can also be obtained by reading off the fundamental group from the Schubert normal form of $T_{2n}$ with slope $\frac{6n+1}{8n+1}$~\cite{S1,R1}.

\begin{proposition}\label{thm:fundamentalGroup}
$$\pi_1(X_{2n})=\left\langle s,t \ |\ swt^{-1}w^{-1}=1\right\rangle,$$
where $w=(ts^{-1}ts^{-1}t^{-1}st^{-1}s)^n$.
\end{proposition}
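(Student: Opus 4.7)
The plan is to combine the Schubert--Riley presentation from Proposition~\ref{prop:fundamentalGroup} with an explicit floor computation to identify the reduced form of $w$. First I would note that $C(2n,4)$ corresponds to $K_{q/p}$ with $p=8n+1$ and $q=6n+1$, as stated in the introduction; the identity $8q-6p=2$ together with $p$ being odd shows $\gcd(p,q)=1$, so $(p,q)$ is an odd coprime pair as required.

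With $\epsilon_j=(-1)^{\lfloor jq/p\rfloor}$, I would then compute the first eight residues $r_j=jq\bmod p$ and quotients $k_j=(jq-r_j)/p$ directly. Writing $jq=k_jp+r_j$ for $j=1,\dots,8$ gives
\[
(r_1,\dots,r_8)=(6n+1,\,4n+1,\,2n+1,\,1,\,6n+2,\,4n+2,\,2n+2,\,2),
\]
with $(k_1,\dots,k_8)=(0,1,2,3,3,4,5,6)$, so $(\epsilon_1,\dots,\epsilon_8)=(+,-,+,-,-,+,-,+)$. Substituting into $t^{\epsilon_1}s^{\epsilon_2}\cdots t^{\epsilon_7}s^{\epsilon_8}$ recovers exactly the first block $ts^{-1}ts^{-1}t^{-1}st^{-1}s$.

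Next I would show that $(\epsilon_j)$ has period $8$ throughout $j=1,\dots,8n$. From $8q=6p+2$ one obtains
\[
\frac{(j+8)q}{p}=\frac{jq}{p}+6+\frac{2}{p},
\]
so $k_{j+8}=k_j+6$ whenever $r_j+2<p$; since $6$ is even, this forces $\epsilon_{j+8}=\epsilon_j$. An induction using the congruence $r_{j+8}\equiv r_j+2\pmod p$ then shows $r_{8k+i}=r_i+2k$ for $k=0,\dots,n-1$ and $i=1,\dots,8$, where the worst-case inductive bound $r_5+2(n-2)+2=8n<p$ at $k=n-2$ ensures no wrap-around occurs along the way. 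Hence $\epsilon_{8k+i}=\epsilon_i$ for every such pair, and the eight-letter block iterates $n$ times to give $w=(ts^{-1}ts^{-1}t^{-1}st^{-1}s)^n$.

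The case $n<0$ is treated analogously via the presentation of~\cite{HS} under the opposite crossing convention, or equivalently by applying the same argument to the mirror of the diagram in Figure~\ref{fig:C[2n,4]}. The only real obstacle is bookkeeping the floor residues and confirming the periodicity bound at the extreme value $k=n-2$; once the shift formula $r_{j+8}\equiv r_j+2\pmod p$ is in hand, the verification is elementary.
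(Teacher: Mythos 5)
Your route is the one the paper only gestures at: the paper's own justification for this proposition is a citation to the presentation in~\cite{HS} (adjusted for the opposite crossing convention), together with the remark that the reduced relation can also be read off from the Schubert normal form of $T_{2n}$ with slope $\frac{6n+1}{8n+1}$. You carry out that second route explicitly, and for $n>0$ your computation is correct and complete: the residues $(r_1,\dots,r_8)$ and quotients $(k_1,\dots,k_8)$ are right, the sign pattern $(+,-,+,-,-,+,-,+)$ reproduces the block $ts^{-1}ts^{-1}t^{-1}st^{-1}s$, the shift identity $(j+8)q=jq+6p+2$ gives $k_{j+8}=k_j+6$ absent wrap-around, and the worst-case bound $r_{8k+5}+2=6n+4+2k\le 8n<p$ for $k\le n-2$ rules wrap-around out; since $8$ is even the alternation $t,s,t,s,\dots$ also has period $8$, so the Schubert word is exactly the $n$-fold power of the block. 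This is a more self-contained argument than the paper's citation, at the cost of the floor-function bookkeeping.

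The genuine gap is the case $n<0$, which the proposition also covers (the paper uses it: see the recursion for $P_{2n}$ with $n<-1$). Neither of your suggested fixes works as stated. The mirror of $C(2|n|,4)$ is $C(-2|n|,-4)$, not $C(-2|n|,4)$, so applying the argument to the mirror addresses the wrong knot. And the computation does not transfer verbatim: for $n<0$ the normal form is $K_{(6|n|-1)/(8|n|-1)}$, so the Schubert word has $p-1=8|n|-2$ letters, whereas the claimed $w=(ts^{-1}ts^{-1}t^{-1}st^{-1}s)^{n}=(s^{-1}ts^{-1}tst^{-1}st^{-1})^{|n|}$ has $8|n|$ letters. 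The two presentations do agree, but only after an extra step: since $w$ begins with $s^{-1}$ and ends with $t^{-1}$, the word $w'=swt$ freely reduces to an $(8|n|-2)$-letter word, and $sw=wt$ is equivalent to $sw'=w't$ (cancel one $s$ on the left and one $t$ on the right of $s^{2}wt=swt^{2}$); one must then verify that this reduced $w'$ coincides with the Schubert word for $q/p=(6|n|-1)/(8|n|-1)$, which requires its own residue computation (for $n=-1$, $p=7$, $q=5$, one checks $sU^{-1}t=ts^{-1}tst^{-1}s$, the Schubert word of $K_{5/7}$). As written, the negative half of the proposition is unproved.
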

\subsection{The Riley-Mednykh polynomial}
Since we are interested in the excellent component (the geometric component) of $R\left(\pi_1(X_{2n})\right)$, 
in this subsection we set 
$M=e^{\frac{i \alpha}{2}}$.
Given the fundamental group of $X_{2n}$,
$$\pi_1(X_{2n})=\left \langle s,t \ |\  swt^{-1}w^{-1}=1 \right \rangle,$$
where $w=(ts^{-1}ts^{-1}t^{-1}st^{-1}s)^n$, let $S=\rho(s),\  T=\rho(t)$ and $W=\rho(w)$ for $\rho \in R\left(\pi_1(X_{2n})\right)$. Then the trace of $S$ and the trace of $T$ are both 
$2 \cos \frac{\alpha}{2}$. 

\begin{lemma}\label{lem:swc}
For $c \in \text{\textnormal{SL}}(2, \C)$ which satisfies $cS=T^{-1}c$ and $c^2=-I$,
$$SWT^{-1}W^{-1}=-(SWc)^2 $$
\end{lemma}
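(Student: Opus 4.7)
The plan is to adapt the proof of Lemma~\ref{lem:swn} to the new word $w=(ts^{-1}ts^{-1}t^{-1}st^{-1}s)^n$. The hypothesis $cS=T^{-1}c$ together with $c^2=-I$ immediately gives (by taking inverses) the companion identity $cT=S^{-1}c$. Consequently conjugation by $c$ in $\text{\textnormal{SL}}(2,\C)$ acts on the generators by the involution $S\mapsto T^{-1}$, $T\mapsto S^{-1}$. The key structural fact I would isolate is the identity $cWc^{-1}=W^{-1}$.

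To establish this, I would first verify it for the single block $u=ts^{-1}ts^{-1}t^{-1}st^{-1}s$. Conjugating each letter of $\rho(u)=TS^{-1}TS^{-1}T^{-1}ST^{-1}S$ by $c$ replaces $S^{\pm 1}$ with $T^{\mp 1}$ and $T^{\pm 1}$ with $S^{\mp 1}$; since $u$ is invariant under the ``reverse the letters and swap $s\leftrightarrow t$'' operation, the resulting product equals $\rho(u^{-1})$. Raising to the $n$-th power then gives $cWc^{-1}=\rho(u^n)^{-1}=W^{-1}$.

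With the conjugation identity in hand, the lemma reduces to a short substitution:
\begin{align*}
(SWc)^2 &= SW(cS)Wc = SWT^{-1}(cWc) = SWT^{-1}(cWc^{-1})c^2 = -SWT^{-1}W^{-1},
\end{align*}
and rearranging gives $SWT^{-1}W^{-1}=-(SWc)^2$.

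The only non-cosmetic step is the palindromic symmetry of the block $u$; this plays the role that the identity $\epsilon_i=\epsilon_{p-i}$ played in the proof of Lemma~\ref{lem:swn}, but here it must be checked concretely from the specific word recorded in Proposition~\ref{thm:fundamentalGroup} rather than deduced from a general formula for the exponents. Once that symmetry is confirmed, every remaining step is a direct application of $cS=T^{-1}c$ and $c^2=-I$.
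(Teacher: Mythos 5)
Your proof is correct and follows essentially the same route as the paper: both arguments push $c$ past $S$ via $cS=T^{-1}c$ and then use the fact that conjugation by $c$ sends the block $TS^{-1}TS^{-1}T^{-1}ST^{-1}S$ to its inverse (the paper writes out the conjugated word explicitly, while you package this as $cWc^{-1}=W^{-1}$ justified by the symmetry of the block). The symmetry check you flag as the one substantive step does hold for the word in Proposition~\ref{thm:fundamentalGroup}, so nothing is missing.
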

\begin{proof}
 \begin{equation*}
\begin{split}
 (SWc)^2 & =SWcSWc=SWT^{-1}c(TS^{-1}TS^{-1}T^{-1}ST^{-1}S)^nc \\
              & =SWT^{-1}(S^{-1}TS^{-1}TST^{-1}ST^{-1})^nc^2=-SWT^{-1}W^{-1}.
 \end{split}
\end{equation*}   
\end{proof}

From the structure of the algebraic set of $R\left(\pi_1(X_{2n})\right)$ with coordinates $\rho(s)$ and $\rho(t)$ we have the defining equation of 
$R\left(\pi_1(X_{2n})\right)$. By plugging in $e^{\frac{i \alpha}{2}}$ into $M$ of that equation, we have the following theorem.

\begin{theorem} \label{thm:RMpolynomial}
$\rho$ is a representation of $\pi_1(X_{2n})$ if and only if $x$ is a root of the following Riley-Mednykh polynomial $P_{2n}=P_{2n}(x,M)$ which is given recursively by 

\medskip
\begin{equation*}
P_{2n} = \begin{cases}
 Q P_{2(n-1)} -M^{12} P_{2(n-2)} \ \text{if $n>1$}, \\
 Q P_{2(n+1)}-M^{12} P_{2(n+2)} \ \text{if $n<-1$},
\end{cases}
\end{equation*}
\medskip

\noindent with initial conditions
\begin{equation*} 
\begin{split}
P_{-2}  & =-M^4 x^3-\left(2 M^6-M^4+2 M^2\right) x^2-\left(M^8-M^6+2
   M^4-M^2+1\right) x+M^4,\\
P_{0}  & =M^{-2} \ \text{\textnormal{for}} \ n < 0 \qquad \text{\textnormal{and}} \qquad P_{0} (x,M)  =1 \ \text{\textnormal{for}} \ n > 0,\\    
P_{2}  & =M^6 x^4+\left(3 M^8-M^6+3 M^4\right) x^3+\left(3 M^{10}-2 M^8+5 M^6-2 M^4+3
   M^2\right) x^2\\
   &+\left(M^{12}-M^{10}+2 M^8-2 M^6+2 M^4-M^2+1\right) x+M^6, \\
\end{split}
\end{equation*}
\noindent and $M=e^{\frac{i \alpha}{2}}$ and
\begin{align*}
Q&=M^6 x^4+\left(3 M^8-2 M^6+3 M^4\right) x^3+\left(3 M^{10}-4 M^8+6 M^6-4 M^4+3
   M^2\right) x^2\\
   &+\left(M^{12}-2 M^{10}+3 M^8-4 M^6+3 M^4-2 M^2+1\right) x+2 M^6.
\end{align*}
\end{theorem}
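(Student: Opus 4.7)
The plan is to reduce the defining relation in $\pi_1(X_{2n})$ to a single scalar trace equation and then exploit the fact that $w$ is an $n$-th power of a fixed word to obtain a Chebyshev-like three-term recursion via Cayley--Hamilton.

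First, by Lemma~\ref{lem:swc} the relation $swt^{-1}w^{-1}=1$ is equivalent to $(SWc)^2 = -I$ in $\text{SL}(2,\C)$, which is in turn equivalent to the single scalar equation $\text{tr}(SWc) = 0$. Thus the representation variety is cut out by the vanishing of this trace, and, per the Definition preceding the theorem, the Riley--Mednykh polynomial is its normalization $\text{tr}(SWc)/\text{tr}(Sc)$, possibly up to a further factor that is a power of $M$.

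Second, I set $U := TS^{-1}TS^{-1}T^{-1}ST^{-1}S \in \text{SL}(2,\C)$ so that $W = U^n$. Since $\det U = 1$, Cayley--Hamilton gives $U^{k+1} = \text{tr}(U)\,U^k - U^{k-1}$ for every $k \in \Z$. Left-multiplying by $S$, right-multiplying by $c$, and taking traces yields the three-term recursion
\[
\text{tr}(SU^{k+1}c) \;=\; \text{tr}(U)\cdot \text{tr}(SU^k c) \;-\; \text{tr}(SU^{k-1}c),
\]
which is the engine of the theorem: the asserted recursion $P_{2(k+1)} = Q\, P_{2k} - M^{12} P_{2(k-1)}$ is exactly this, once both sides have been rescaled by the normalization that turns traces into polynomials in $(x,M)$.

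Third, I compute $\text{tr}(U)$ directly from the matrices $S$, $T$ of Section~\ref{sec:poly} re-expressed in the coordinates $M = e^{i\alpha/2}$ and $x = 2-\text{tr}(ST)$; the expected identity is $\text{tr}(U) = M^{-6}\,Q(x,M)$ with $Q$ the quartic in the statement, and it is this negative power $M^{-6}$ that accounts for the factor $M^{12}$ in the recursion. Concretely, setting $P_{2k} := M^{6k}\,\text{tr}(SU^k c)/\text{tr}(Sc)$ for $k \geq 0$ and multiplying the trace recursion through by $M^{6(k+1)}/\text{tr}(Sc)$ reproduces the stated recursion with coefficients $Q$ and $-M^{12}$. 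Running the recursion in the opposite direction with $U^{-1}$ in place of $U$ handles the $n<0$ branch; a slightly shifted normalization is required there to keep $P_{2k}$ polynomial (rather than Laurent) in $M$, and that shift is precisely what forces the alternate initial value $P_0 = M^{-2}$. The initial conditions $P_{-2}$, $P_0$, $P_2$ themselves are then verified by direct evaluation of $\text{tr}(SU^k c)$ for $k\in\{-1,0,1\}$ using the explicit $S$, $T$, $c$ and simplification in terms of $(x,M)$.

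The main obstacle will be the bookkeeping of $M$-exponents and the verification that $\text{tr}(Sc)$ genuinely divides $\text{tr}(SU^k c)$ as polynomials in $(x,M)$: this divisibility is the algebraic reflection of the fact that the reducible locus $\text{tr}(Sc)=0$ always satisfies $\text{tr}(SWc)=0$ and is a spurious component to be factored out, and reconciling the normalization across both the $n>0$ and $n<0$ branches so that the same recursion propagates with the correct power $M^{12}$ in each direction will require careful tracking of exponents.
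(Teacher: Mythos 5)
Your proposal is correct and follows essentially the same route as the paper: reduce the group relation to $\operatorname{tr}(SWc)=0$ via Lemma~\ref{lem:swc}, obtain the three-term recursion for $\operatorname{tr}(SU^kc)$ with coefficient $\operatorname{tr}(U)=Q/M^6$, normalize by $M^{6n}$ (resp.\ $M^{6n-2}$ for $n<0$) after dividing out the common factor $\operatorname{tr}(Sc)$, and check the base cases directly. The only cosmetic difference is that you derive the recursion by applying Cayley--Hamilton to $U$ itself ($U^{k+1}=\operatorname{tr}(U)U^k-U^{k-1}$) and then sandwiching with $S$ and $c$, whereas the paper uses the $\mathrm{SL}(2,\C)$ trace identity together with $cU=U^{-1}c$; both yield the identical recursion, and your divisibility worry is settled by induction from the base cases exactly as in the paper.
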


\begin{proof}
Note that $SWT^{-1}W^{-1}=I$, which gives the defining equations of 
$R\left(\pi_1(X_{2n})\right)$, is equivalent to $(SWc)^2=-I$ in $\text{SL}(2,\C)$ 
by Lemma~\ref{lem:swc} and  
$(SWc)^2=-I$ in $\text{SL}(2,\C)$ is equivalent to $\text{\textnormal{tr}}(SWc)=0$.

We can find  two $c$'s in $\text{\textnormal{SL}}(2, \C)$ which satisfies $cS=T^{-1}c$ and $c^2=-I$ by direct computations. The existence and the uniqueness of the isometry (the involution) which is represented by $c$ are shown in~\cite[p. 46]{F}. Since two $c$'s give the same element in $\text{\textnormal{PSL}}(2, \C)$, we use one of them.
Hence, we may assume
 \begin{center}
$$\begin{array}{cc}
c=\left[\begin{array}{cc}
        0 & -\frac{M }{\sqrt{-1+2 M^2-M^4-M^2 x}}     \\
        \frac{\sqrt{-1+2 M^2-M^4-M^2 x}}{M} & 0
       \end{array}  \right],
\end{array}$$
\end{center}

 \begin{center}
$$\begin{array}{ccccc}
S=\left[\begin{array}{cc}
     e^ {\frac{ i \alpha}{2} }& 1       \\
     0 &  e^{-\frac{i  \alpha}{2}} 
                     \end{array} \right],                          
\ \ \
T=\left[\begin{array}{cc}
     e^ { \frac{i \alpha}{2} }& 0       \\
     2-e^{i \alpha}-e^{-i \alpha}-x &  e^{ -\frac{i \alpha}{2}} 
                     \end{array} \right],            
\end{array}$$
\end{center}

and let $U=TS^{-1}TS^{-1}T^{-1}ST^{-1}S$. Note that $\text{\textnormal{tr}}(S)$ and $\text{\textnormal{tr}}(T)$ are the same as $\text{\textnormal{tr}}(S)$ 
and $\text{\textnormal{tr}}(T)$ of subsection~\ref{subsec:RM}. 

Recall that $P_{2n}$ is the defining polynomial of the algebraic set $\{(M,x)\}$ and the defining polynomial of $R\left(\pi_1(X_{2n})\right)$ corresponding to our choice of $\rho(s)$ and $\rho(t)$. We will show that 
$P_{2n}$ is equal to $\text{\textnormal{tr}}(SWc) / \text{\textnormal{tr}}(Sc)$ times $M^{6n}$ for $n>0$ and $M^{6n-2}$ for $n<0$. One can easily see 
$\text{\textnormal{tr}}(SUc)=P_2 \text{\textnormal{tr}}(Sc)/M^6$, $\text{\textnormal{tr}}(SU^{-1}c)=P_{-2} \text{\textnormal{tr}}(Sc)/M^4$ and $\text{\textnormal{tr}}(U)=Q/M^6=\text{\textnormal{tr}}(U^{-1})$. We set $P_0$ as the statement of the theorem.
Now, we only need the following recurrence relations.

\begin{align*}
\text{\textnormal{tr}}(SWc) & =\text{\textnormal{tr}}(SU^nc)=\text{\textnormal{tr}}(SU^{n-1}cU^{-1})=\text{\textnormal{tr}}(SU^{n-1}c)\text{\textnormal{tr}}(U^{-1})-\text{\textnormal{tr}}(SU^{n-1}cU) \\
            & =\text{\textnormal{tr}}(SU^{n-1}c)\text{\textnormal{tr}}(U^{-1})-\text{\textnormal{tr}}(SU^{n-2}c)=0 \ \text{if $n>1$}
\end{align*}
or
\begin{equation*}
\begin{split}
\text{\textnormal{tr}}(SWc) & =\text{\textnormal{tr}}(SU^nc)=\text{\textnormal{tr}}(SU^{n+1}cU)=\text{\textnormal{tr}}(SU^{n+1}c)\text{\textnormal{tr}}(U)-\text{\textnormal{tr}}(SU^{n+1}cU^{-1}) \\
            & =\text{\textnormal{tr}}(SU^{n+1}c)\text{\textnormal{tr}}(U)-\text{\textnormal{tr}}(SU^{n+2}c)=0 \ \text{if $n<-1$},
\end{split}
\end{equation*}
where the third equality comes from the Cayley-Hamilton theorem. Since
$\text{\textnormal{tr}}(Sc)$, $\text{\textnormal{tr}}(SUc)$, and $\text{\textnormal{tr}}(SU^{-1}c)$ have 
$\text{\textnormal{tr}}(Sc)=\frac{\sqrt{-1+2 M^2-M^4-M^2 x}}{M}$
 as a common factor, all of 
$\text{\textnormal{tr}}(SWc)$'s have $\text{\textnormal{tr}}(Sc)$ as a common factor. We left the common factor out of $\text{\textnormal{tr}}(SWc)$, multiplied it by a power of $M^{6n}$ if $n>0$ and $M^{6n-2}$ for $n<0$ to clear the fractions and denote it by $P_{2n}$, the Riley-Mednykh polynomial.
\end{proof}

\subsection{Longitude}
\label{sec:longitude}
   Let $l = ww^{*}$, where $w^{*}$ is the word obtained by reversing $w$. Let $L=\rho(l)_{11}$. Then $l$ is the longitude which is null-homologus in $X_{2n}$. Recall $\rho(w)=U^n$. We can write $\rho(w^{*})=\widetilde{U}^n$. It is easy to see that $U$ and $\widetilde{U}$ can be written as
   
   $$U=\begin{pmatrix}
u_{11} & u_{12} \\
u_{21} & u_{22}
\end{pmatrix}$$
and 
$$\widetilde{U}=
\begin{pmatrix}
\tilde{u}_{22} & \tilde{u}_{12} \\
\tilde{u}_{21}& \tilde{u}_{11}
\end{pmatrix}
$$
   
where $\tilde{u}_{ij}$ is obtained by $u_{ij}$ by replacing $M$ with $M^{-1}$. Similar computation was introduced in~\cite{HS}.

\begin{definition} \label{def:longitude}
 The \emph{complex length} of the longitude $l$ is the complex number 
 $\gamma_{\alpha}$ modulo $4 \pi \Z$ satisfying 
\begin{align*}
 \text{\textnormal{tr}}(\rho(l))=2 \cosh \frac{\gamma_{\alpha}}{2}.
\end{align*}
 Note that 
 $l_{\alpha}=|Re(\gamma_{\alpha})|$ is the real length of the longitude of the cone-manifold $X_{2n}(\alpha)$.
\end{definition}

The following lemma was introduced in~\cite{HS} with slightly different coordinates. 

\begin{lemma}~\cite{HS}
\label{lem:lemma}
$u_{21} L+\tilde{u}_{21}=0$.
\end{lemma}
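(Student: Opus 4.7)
The plan is to use the fact that $l = ww^*$ is the preferred (null-homologous) longitude, so $\rho(l)$ commutes with the meridian $\rho(s) = S$. Because $S = \bigl(\begin{smallmatrix} M & 1 \\ 0 & M^{-1} \end{smallmatrix}\bigr)$ is upper triangular with distinct diagonal entries $M \neq M^{-1}$, any $2\times 2$ matrix commuting with $S$ is itself upper triangular. Hence $\rho(l)_{21}=0$, while by definition $\rho(l)_{11} = L$.

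I would then read off these two entries of the product $\rho(l) = \rho(w)\rho(w^*)$ using the matrix shapes recorded just before the lemma. The $(2,1)$-entry yields
\[
u_{21}\,\tilde u_{22} + u_{22}\,\tilde u_{21} \;=\; 0,
\]
and the $(1,1)$-entry yields
\[
L \;=\; u_{11}\,\tilde u_{22} + u_{12}\,\tilde u_{21}.
\]
Multiplying the second identity by $u_{21}$ and substituting $u_{21}\tilde u_{22} = -u_{22}\tilde u_{21}$ from the first gives
\[
u_{21} L \;=\; -u_{11}u_{22}\,\tilde u_{21} + u_{12}u_{21}\,\tilde u_{21} \;=\; -\bigl(u_{11}u_{22} - u_{12}u_{21}\bigr)\tilde u_{21} \;=\; -\tilde u_{21},
\]
where the last step uses $\det \rho(w) = 1$. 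Rearranging gives $u_{21} L + \tilde u_{21} = 0$.

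The one delicate ingredient is the swap-diagonal form of $\widetilde U$, namely that $(\widetilde U)_{11} = \tilde u_{22}$ and $(\widetilde U)_{22} = \tilde u_{11}$ rather than the other way around; it is exactly this layout that lets the vanishing of the $(2,1)$-entry interact with $\det = 1$ to collapse into an identity involving only four scalars. I expect the only step requiring care to be verifying this shape, which follows from the observation that the building block of $w$ is palindromic up to the swap $s \leftrightarrow t$, combined with the identities $JS^TJ = S\vert_{M\mapsto M^{-1}}$ and $JT^TJ = T\vert_{M\mapsto M^{-1}}$ for $J = \bigl(\begin{smallmatrix} 0 & 1 \\ 1 & 0 \end{smallmatrix}\bigr)$. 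Together these show that word reversal, transposition, and the $M$-inversion combine to give precisely the entry permutation recorded for $\widetilde U$; once this is granted, the rest of the proof is the three lines of $2\times 2$ algebra above.
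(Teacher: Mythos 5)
Your overall strategy is sound, and two of your ingredients check out: since the longitude and meridian commute, $\rho(l)$ commutes with $S$ and hence is upper triangular with $(1,1)$-entry $L$; and the swap-diagonal shape of $\widetilde{U}$ does follow from $JS^{T}J=S\vert_{M\mapsto M^{-1}}$ and $JT^{T}J=T\vert_{M\mapsto M^{-1}}$ applied letter by letter to the reversed block. (For the record, the paper offers no proof of this lemma at all --- it only cites \cite{HS} --- so the relevant comparison is with the standard argument from that source, which does contain the step you are missing.)

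The gap is that the paper sets $\rho(w)=U^{n}$ and $\rho(w^{*})=\widetilde{U}^{n}$, while $u_{ij}$ and $\tilde{u}_{ij}$ are the entries of the single blocks $U$ and $\widetilde{U}$. Your readout of the $(2,1)$- and $(1,1)$-entries of $\rho(l)=\rho(w)\rho(w^{*})$ uses the entries of $U$ and $\widetilde{U}$ directly, which is the computation for $n=1$ only: for $|n|\geq 2$ those expressions are not the entries of $U^{n}\widetilde{U}^{n}$, the quantity your algebra controls is $(U\widetilde{U})_{11}$ rather than $L$, and $(U\widetilde{U})_{21}=0$ is not known since $uu^{*}$ is not the longitude. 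The fix is to run your three lines of $2\times 2$ algebra on $W=U^{n}$ and $\widetilde{W}=\widetilde{U}^{n}$ --- only $\det W=1$ and $(W\widetilde{W})_{21}=0$ are needed there, not the swap-diagonal shape --- which gives $(U^{n})_{21}\,L+(\widetilde{U}^{n})_{21}=0$. One then descends to the stated identity by Cayley--Hamilton: $U^{n}=\varphi_{n-1}(z)U-\varphi_{n-2}(z)I$ with $z=\trace U$ and $\varphi_{k}$ the Chebyshev-type polynomials, so $(U^{n})_{21}=\varphi_{n-1}(z)\,u_{21}$ and $(\widetilde{U}^{n})_{21}=\varphi_{n-1}(\tilde{z})\,\tilde{u}_{21}$; since $\trace\widetilde{U}=\trace U$ (the trace of a word in $\text{SL}(2,\C)$ equals that of its reversal; concretely $Q/M^{6}$ is symmetric under $M\mapsto M^{-1}$), the common factor $\varphi_{n-1}(z)$ cancels, and it is nonzero on the irreducible component because otherwise $U^{n}$ would be central and the relation $SW=WT$ would force $S=T$. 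With that insertion your argument becomes a complete proof for all $n$.
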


\begin{theorem}
\label{thm:longitude}
Recall that $L=\rho(l)_{11}$ where $l$ is the longitude of $X_{2n}$, $M=e^{\frac{i \alpha}{2}}$, and $x$ is a root of the following Riley-Mednykh polynomial $P_{2n}$. We have
\begin{align*}
L=-M^{-2}\frac{M^{-4}-M^{-2}+(2 M^{-2}+M^{2}-1)x+x^2}{M^{4}-M^{2}+(M^{-2}+2M^{2}-1)x+x^2}.
\end{align*}
\end{theorem}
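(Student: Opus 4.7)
The strategy is to invoke Lemma~\ref{lem:lemma}, which rearranges to $L = -\tilde{u}_{21}/u_{21}$, so the theorem reduces to computing the single $(2,1)$-entry $u_{21}$ of $U = TS^{-1}TS^{-1}T^{-1}ST^{-1}S$ in the coordinates fixed in the proof of Theorem~\ref{thm:RMpolynomial}; the entry $\tilde{u}_{21}$ is then $u_{21}$ with $M$ replaced by $M^{-1}$, as specified in Section~\ref{sec:longitude}. Crucially, Lemma~\ref{lem:lemma} reflects the peripheral fact that $\rho(l)$ commutes with $\rho(s)$, which holds identically in the knot group, so the identity $L = -\tilde{u}_{21}/u_{21}$ is valid on the whole representation variety and no use of the Riley--Mednykh relation $P_{2n}(x,M)=0$ is needed here.

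First I would write out
\[ S = \begin{pmatrix} M & 1 \\ 0 & M^{-1} \end{pmatrix},\qquad T = \begin{pmatrix} M & 0 \\ y & M^{-1} \end{pmatrix}, \qquad y := 2-M^{2}-M^{-2}-x, \]
together with the explicit forms of $S^{-1}$ and $T^{-1}$, and factor $U$ as $U = (TS^{-1})^{2}(T^{-1}S)^{2}$. The two building blocks $TS^{-1}$ and $T^{-1}S$ are each a single $2\times 2$ product; squaring them is another pair of products; and the final combination needs only the $(2,1)$-entry. Organized this way the eight-fold product collapses to a short chain of short computations. After collecting powers of $M$ and $x$ and using $2-y = M^{2}+M^{-2}+x$, I expect $u_{21}$ to appear in the form
\[ u_{21} \;=\; M^{a}\bigl(M^{4}-M^{2}+(M^{-2}+2M^{2}-1)x+x^{2}\bigr) \]
for some integer $a$ that the explicit calculation will pin down.

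Substituting $M \mapsto M^{-1}$ then gives
\[ \tilde{u}_{21} \;=\; M^{-a}\bigl(M^{-4}-M^{-2}+(M^{2}+2M^{-2}-1)x+x^{2}\bigr), \]
and Lemma~\ref{lem:lemma} yields
\[ L \;=\; -\frac{\tilde{u}_{21}}{u_{21}} \;=\; -M^{-2a}\cdot\frac{M^{-4}-M^{-2}+(2M^{-2}+M^{2}-1)x+x^{2}}{M^{4}-M^{2}+(M^{-2}+2M^{2}-1)x+x^{2}}, \]
which agrees with the stated formula precisely when $a = 1$; the explicit matrix multiplication verifies this exponent.

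The main obstacle is purely algebraic bookkeeping: keeping signs correct when inverting $S$ and $T$, tracking powers of $M$ through an eight-factor product, and recognizing the quadratic-in-$x$ factor $M^{4}-M^{2}+(M^{-2}+2M^{2}-1)x+x^{2}$ in the final expression for $u_{21}$. Once that factorization is extracted, the ratio simplification and the leading $-M^{-2}$ prefactor fall out immediately from Lemma~\ref{lem:lemma}.
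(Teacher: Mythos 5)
Your proposal takes exactly the paper's route: the paper's proof is precisely ``by directly computing $u_{21}L+\tilde{u}_{21}=0$ in Lemma~\ref{lem:lemma}'' with the explicit matrices for $S$ and $T$ fixed in the proof of Theorem~\ref{thm:RMpolynomial}, which is what you set up. One small correction to your anticipated form: $u_{21}$ is not a pure power of $M$ times the quadratic $D=M^{4}-M^{2}+(M^{-2}+2M^{2}-1)x+x^{2}$; carrying out the product gives $u_{21}=-M\,z(2-z)\,D$ with $z=M^{2}+M^{-2}+x$, but since $z(2-z)$ is invariant under $M\mapsto M^{-1}$ it cancels against the matching factor of $\tilde{u}_{21}$, so the prefactor $-M^{-2}$ and the stated formula still follow.
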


\begin{proof}
By directly computing $u_{21} L+\tilde{u}_{21}=0$ in Lemma~\ref{lem:lemma}, the theorem follows.
\end{proof}

\subsection{Schl\"{a}fli formula for the volume}
We will use the following Schl\"{a}fli formula to prove Theorem~\ref{thm:main}.

\begin{theorem}~\cite[Theorem 3.20]{CHK}
Let $X_{2n}(\alpha)$ be a family of cone-manifold structures of constant curvature $-1$. Assume that the underlying space is $\mathbb{S}^3$ and the singular locus is the knot $T_{2n}$. Then the derivative of volume $V \left(X_{2n}(\alpha)\right)$ of $X_{2n}(\alpha)$ satisfies
\begin{align*}
-2 \frac{dV \left(X_{2n}(\alpha)\right)}{d \alpha} =l_{\alpha}.
\end{align*}
\end{theorem}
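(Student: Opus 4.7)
The plan is to deduce this statement from the general Schl\"afli formula for hyperbolic cone-manifolds, which is the main content of Theorem 3.20 in \cite{CHK}, and then specialize to our one-parameter family. The general formula asserts that for a smooth one-parameter family of hyperbolic cone-manifold structures on a fixed underlying topological space, with singular locus a link $\Sigma = \Sigma_1 \cup \cdots \cup \Sigma_k$ carrying cone-angles $\alpha_i(t)$ and singular lengths $\ell_i(t)$, the volume $V(t)$ varies by
\[
dV \;=\; -\frac{1}{2}\sum_{i=1}^{k}\ell_i\,d\alpha_i.
\]

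In our situation I would specialize this directly: the underlying space is $\mathbb{S}^3$, the singular locus is the \emph{single} knot $T_{2n}$, and the single cone-angle is $\alpha$ with associated real singular length $l_\alpha = |\Re(\gamma_\alpha)|$ in the notation of Definition~\ref{def:longitude}. The sum above collapses to a single term, giving $dV/d\alpha = -l_\alpha/2$, which is exactly the claimed identity $-2\,dV(X_{2n}(\alpha))/d\alpha = l_\alpha$. For this specialization to be legitimate I would need to observe that $\{X_{2n}(\alpha)\}_{0 \le \alpha < \alpha_0}$ is indeed a smooth one-parameter family of hyperbolic cone-manifold structures in the sense of \cite{CHK}; this is guaranteed by the deformation picture built in Sections~\ref{sec:hyperbolic}--\ref{sec:poly}, where the holonomy depends analytically on the parameter $x$, and $x$ in turn varies analytically with $\alpha$ as the distinguished root of the Riley-Mednykh polynomial $P_{2n}(x,M)$ with $M=e^{i\alpha/2}$.

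The main obstacle is not the specialization but the underlying general Schl\"afli formula itself. Its proof requires decomposing the cone-manifold into totally geodesic tetrahedra (or, more invariantly, a smooth family of such decompositions), applying the classical polyhedral Schl\"afli identity $dV = -\tfrac{1}{2}\sum_e \ell_e\,d\theta_e$ to each piece, and verifying that along every non-singular edge the contributions cancel, because the dihedral angles around such an edge sum to the constant $2\pi$ (so the differentials sum to zero) while the edges share a common length. Only the truly singular edges, where the cone-angle is not $2\pi$ but $\alpha_i$, contribute to the final sum. Since this argument is carried out in detail in \cite{CHK}, I would simply cite it rather than reproduce it, and devote the main work of the proof to verifying the smooth-family hypothesis in our specific setting.
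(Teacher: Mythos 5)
Your proposal is correct and takes essentially the same route as the paper, which states this result purely as a citation of~\cite[Theorem 3.20]{CHK}: you invoke the general Schl\"afli formula $dV=-\tfrac12\sum_i \ell_i\,d\alpha_i$ for curvature $-1$ and specialize to the single singular component $T_{2n}$, with the correct sign convention. Your additional remarks on verifying the smooth one-parameter family hypothesis and the sketch of the polyhedral cancellation argument are sensible supplements but do not constitute a different approach.
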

\subsection{Proof of Theorem~\ref{thm:main}} \label{sec:proof}

For $n \geq 1$ and $M=e^{i \frac{\alpha}{2}}$, $P_{2n}(x,M)$ have $4n$ component zeros, and for  $n \leq -1$, $-(4n+1)$ component zeros. The component which passes through $(x_1,x_2)=\left(2-2 \cos{\left(\frac{\pi(-2n+1)}{8n+1}\right)},2-2 \cos{\left(\frac{\pi(-2n-1)}{8n+1}\right)}\right)$ at $\alpha=\pi$ is the geometric component by ~\cite[Theorem 2.1]{HLM2}, $2-x_1>0$ and $2-x_2>0$. For each $n$, there exists an angle $\alpha_0 \in [\frac{2\pi}{3},\pi)$ such that $T_{2n}(\alpha)$ is hyperbolic for $\alpha \in (0, \alpha_0)$, Euclidean for $\alpha=\alpha_0$, and spherical for $\alpha \in (\alpha_0, \pi]$ \cite{P2,HLM1,K1,PW}. 
From the following Equality~(\ref{equ:absL}), when $|L|=1$, which happens when $\alpha=\alpha_0$, $\text{\textnormal{Im}}(x)=0$. Hence, when 
$\alpha$ increases from $0$ to $\alpha_0$, two complex numbers $x$ and $\overline{x}$ approach a same real number. In other words, 
$P_{2n}(x,e^{\frac{i \alpha_0}{2}})$ has a multiple root.
Denote by $D(X_{2n}(\alpha))$  the discriminant of
$P_{2n}(x,M)$ over $x$. Then $\alpha_0$ will be one of the zeros of $D(X_{2n}(\alpha))$.

From Theorem~\ref{thm:longitude}, for some nonnegative real numbers $a$ and $b$, we have the following equality,
\begin{equation}\label{equ:absL}
\begin{split}
|L|^2 &= \left|\frac{M^{-4}-M^{-2}+(2 M^{-2}+M^{2}-1)x+x^2}{M^{4}-M^{2}+(M^{-2}+2M^{2}-1)x+x^2}\right|^2
= \frac{ a-{\rm Im}(x)b}{a + {\rm Im}(x)b}.
\end{split}
\end{equation}

For the volume, we choose $L$ with $|L|\geq1$ and hence we have $\text{\textnormal{Im}}(x) \leq 0$ by Equality~(\ref{equ:absL}). 
On the geometric component we have
 the volume of a hyperbolic cone-manifold 
$X_{2n}(\alpha)$ for $0 \leq \alpha < \alpha_0$:
\begin{align*}
\text{\textrm{Vol}}(X_{2n}(\alpha)) &=-\int_{\alpha_0}^{\alpha} \frac{l_{\alpha}}{2} \: d\alpha \\
                        &=-\int_{\alpha_0}^{\alpha} \log|L| \: d\alpha\\
                         &=-\int_{\pi}^{\alpha} \log|L| \: d\alpha\\
                         &=\int^{\pi}_{\alpha} \log|L| \: d\alpha\\
                         &=\int^{\pi}_{\alpha}  \log \left|\frac{M^{-4}-M^{-2}+(2 M^{-2}+M^{2}-1)x+x^2}{M^{4}-M^{2}+(M^{-2}+2M^{2}-1)x+x^2}\right|\: d\alpha,                       
\end{align*}
where the first equality comes from the Schl\"{a}fli formula for cone-manifolds (Theorem 3.20 of~\cite{CHK}), the second equality comes from the fact that $l_{\alpha}=|Re(\gamma_{\alpha})|$ is the real length of the longitude of 
$X_{2n}(\alpha)$, the third equality comes from the fact that $\log|L|=0$  for $\alpha_0 < \alpha \leq \pi$ by Equality~\ref{equ:absL} since all the characters are real (the proof of Proposition 6.4 of~\cite{PW}) for $\alpha_0 < \alpha \leq \pi$, and 
$\alpha_0 \in [\frac{2 \pi}{3},\pi)$ is a zero of the discriminant $D(X_{2n}(\alpha))$.

\medskip

\subsection{Schl\"{a}fli formula for the generalized Chern-Simons function}
\label{sec:CSfunction}

The general references for this section are~\cite{HLM3,HLM2,Y1,MeyRub1} and~\cite{HL}. 
We introduce the generalized Chern-Simons function on the family of $C(2n,4)$ cone-manifold structures. For the oriented knot $T_{2n}$, we orient a chosen meridian $s$ such that the orientation of $s$ followed by orientation of $T_{2n}$ coincides with orientation of $S^3$. Hence, we use the definition of Lens space in~\cite{HLM2} so that we can have the right orientation when the definition of Lens space is combined with the following frame field. On the Riemannian manifold $S^3-T_{2n}-s$ we choose a special frame field $\Gamma$. A \emph{special} frame field $\Gamma=(e_1,e_2,e_3)$ is an orthonomal frame field such that for each point $x$ near $T_{2n}$, $e_1(x)$ has the knot direction, $e_2(x)$ has the tangent direction of a meridian curve, and $e_3(x)$ has the knot to point direction.  A special frame field always exists by Proposition $3.1$ of~\cite{HLM3}. From $\Gamma$ we obtain an orthonomal frame field 
$\Gamma_{\alpha}$ on $X_{2n}(\alpha)-s$ by the Schmidt orthonormalization process with respect to the geometric structure of the cone manifold $X_{2n}(\alpha)$. Moreover it can be made special by deforming it in a neighborhood of the singular set and 
$s$ if necessary. $\Gamma^{\prime}$ is an extention of $\Gamma$ to $S^3-T_{2n}$. For each cone-manifold $X_{2n}(\alpha)$, we assign the real number:

\begin{equation*}
I\left(X_{2n}(\alpha)\right)=\frac{1}{2} \int_{\Gamma(S^3-T_{2n}-s)}Q-\frac{1}{4 \pi} \tau(s,\Gamma^{\prime})-\frac{1}{4 \pi} \left(\frac{\beta \alpha}{2 \pi}\right),
\end{equation*}

\noindent where $-2 \pi \leq \beta \leq 2 \pi$, $Q$ is the Chern-Simons form:

\begin{equation*}
Q=\frac{1}{4 \pi^2} \left(\theta_{12} \wedge \theta_{13} \wedge \theta_{23} + \theta_{12} \wedge \Omega_{12} + \theta_{13} \wedge \Omega_{13} + \theta_{23} \wedge \Omega_{23} \right),
\end{equation*}

\noindent and 

\begin{equation*}
\tau(s,\Gamma^{\prime})=-\int_{\Gamma^{\prime}(s)} \theta_{23},
\end{equation*}

\noindent where ($\theta_{ij}$) is the connection $1$-form, ($\Omega_{ij}$) is the curvature $2$-form of the Riemannian connection on $X_{2n}(\alpha)$ and the integral is over the orthonomalizations of the same frame field. When $\alpha = \frac{2 \pi}{k}$ for some positive integer, 
$I \left(X_{2n}\left(\frac{2 \pi}{k}\right)\right)$ (mod $\frac{1}{k}$ if $k$ is even or mod $\frac{1}{2k}$ if $k$ is odd) is independent of the frame field $\Gamma$ and of the representative in the equivalence class $\overline{\beta}$ and hence an invariant of the orbifold $X_{2n}\left(\frac{2 \pi}{k}\right)$. $I \left(X_{2n}\left(\frac{2 \pi}{k}\right)\right)$ (mod $\frac{1}{k}$ if $k$ is even or mod $\frac{1}{2k}$ if $k$ is odd) is called \emph{the Chern-Simons invariant of the orbifold} and is denoted by 
$\text{\textnormal{cs}} \left(X_{2n}\left(\frac{2 \pi}{k}\right) \right)$.

On the generalized Chern-Simons function on the family of $C(2n,4)$ cone-manifold structures we have the following Schl\"{a}fli formula.

\begin{theorem}(Theorem 1.2 of~\cite{HLM2})~\label{theorem:schlafli}
For a family of geometric cone-manifold structures, $X_{2n}(\alpha)$, and differentiable functions $\alpha(t)$ and $\beta(t)$ of $t$ we have
\begin{equation*}
dI \left(X_{2n}(\alpha)\right)=-\frac{1}{4 \pi^2} \beta d \alpha.
\end{equation*} 
\end{theorem}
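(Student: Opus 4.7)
The plan is to differentiate each of the three summands in the definition of $I(X_{2n}(\alpha))$ with respect to $t$ and check that all contributions collapse into $-\frac{1}{4\pi^{2}}\beta\,\dot\alpha$. The explicit closed-form term $-\frac{1}{4\pi}\cdot\frac{\beta\alpha}{2\pi}$ contributes $-\frac{1}{8\pi^{2}}(\dot\beta\alpha+\beta\dot\alpha)$, which is already linear in $\dot\alpha$ and $\dot\beta$; the remaining geometric terms must therefore produce exactly $\frac{1}{8\pi^{2}}(\alpha\dot\beta-\beta\dot\alpha)$.

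For the bulk Chern-Simons integral $\tfrac{1}{2}\int Q$ I would invoke the standard transgression identity: along a smooth family of Riemannian metrics the variation of the Chern-Simons density satisfies $\dot Q=\frac{1}{4\pi^{2}}\trace(\dot\theta\wedge\Omega)+d\eta$ for a canonical 2-form $\eta$. Because $X_{2n}(\alpha)$ is locally isometric to $\mathbb{H}^{3}$, the curvature matrix $(\Omega_{ij})$ is algebraically determined by the coframe (sectional curvature $-1$), so $\trace(\dot\theta\wedge\Omega)$ is itself exact on $S^{3}-T_{2n}-s$. Stokes' theorem then reduces the variation of the bulk integral to boundary integrals over shrinking tubular neighborhoods of the singular locus $T_{2n}$ and of the arc $s$.

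The boundary contribution from near $s$ cancels precisely against the derivative $-\frac{1}{4\pi}\frac{d}{dt}\tau(s,\Gamma')$, since $\tau$ is by definition the line integral of $\theta_{23}$ along $\Gamma'(s)$; this is the standard Meyerhoff-style cancellation. The remaining contribution from a tube around $T_{2n}$ is evaluated in Fermi coordinates $(r,\varphi,z)$ adapted to the cone axis, with $\varphi$ running over $[0,\alpha]$ and $z$ parameterizing the singular curve; the connection 1-forms of the cone metric are completely explicit in this chart. Taking the limit of the boundary 2-form integral as $r\to 0$ produces $\frac{1}{8\pi^{2}}(\alpha\dot\beta-\beta\dot\alpha)$, where the quantity $\beta$ is read off as the holonomy twist of the special frame field $\Gamma$ relative to a parallel framing along the singular curve (this is precisely the $\beta$ of the definition of $I$). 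Summing with the differential of the third term gives $\dot I=-\frac{1}{4\pi^{2}}\beta\dot\alpha$, which is the claim.

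The main obstacle will be the explicit Fermi-coordinate calculation around the singular axis: one must write down the correct connection and curvature matrix for the cone metric, evaluate the boundary integral of the Chern-Simons density in the limit $r\to 0$, and carefully identify the coefficient as the antisymmetric bilinear $\alpha\dot\beta-\beta\dot\alpha$ with the correct sign and normalization. Secondary subtleties are controlling the integrability of $Q$ in the thin tube around the meridian arc $s$, and confirming that a change of special frame field $\Gamma$ shifts $I$ only by a quantity with zero differential (a rational constant modulo $1/k$ or $1/2k$), so that the differential $dI$ is well defined independently of all choices.
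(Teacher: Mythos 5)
There is nothing in the paper to compare your argument against: Theorem~\ref{theorem:schlafli} is quoted verbatim as Theorem~1.2 of~\cite{HLM2} and the paper offers no proof of it, treating it as an imported tool (the actual argument lives in~\cite{HLM2} and~\cite{HLM3}). Judged on its own, your outline reconstructs the strategy of that source correctly: the bookkeeping on the explicit term $-\frac{1}{4\pi}\bigl(\frac{\beta\alpha}{2\pi}\bigr)$ is right, so the geometric terms must supply $\frac{1}{8\pi^2}(\alpha\,\dot\beta-\beta\,\dot\alpha)$; the variation of $\int Q$ does localize to boundary tubes because hyperbolic metrics are conformally flat and hence critical for the Riemannian Chern--Simons functional (this is the content of~\cite{CS}, and is a cleaner justification than your claim that $\operatorname{tr}(\dot\theta\wedge\Omega)$ is ``exact because the curvature is determined by the coframe''); and the cancellation of the tube around $s$ against $d\tau(s,\Gamma')$ is the standard Meyerhoff-type mechanism.

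The genuine gap is that the one step which actually produces the formula --- the limit as $r\to 0$ of the boundary integral over the tube around $T_{2n}$, yielding exactly $\frac{1}{8\pi^2}(\alpha\,\dot\beta-\beta\,\dot\alpha)$ --- is asserted rather than carried out, and you say so yourself. Without writing down the cone metric in Fermi coordinates, the connection forms of the \emph{special} frame field $\Gamma$ (whose defining properties along $T_{2n}$ are what force the answer to depend only on $\alpha$, $\beta$ and their derivatives), and the orientation conventions fixed in Section~\ref{sec:CSfunction}, one cannot certify the coefficient $\frac{1}{8\pi^2}$ or the sign; an error there would change the theorem, not merely its proof. A secondary unaddressed point is the well-definedness of $\frac12\int Q$ near the singular locus and near $s$ (integrability of the Chern--Simons density for the orthonormalized frame $\Gamma_\alpha$), which in~\cite{HLM3} is handled by the existence result for special frame fields that you cite only in passing. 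So the proposal is a correct plan consistent with the cited source, but it is not yet a proof.
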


\subsection{Proof of the theorem~\ref{thm:main1}} \label{sec:proof1}

 On the geometric component which is identified in the proof of Theorem~\ref{thm:main}, we can calculate
 the Chern-Simons invariant of an orbifold 
$X_{2n}(\frac{2 \pi}{k})$ (mod $\frac{1}{k}$ if $k$ is even or mod $\frac{1}{2k}$ if $k$ is odd), where $k$ is a positive integer such that $k$-fold cyclic covering of $X_{2n}(\frac{2 \pi}{k})$ is hyperbolic:
\begin{align*}
& \text{\textnormal{cs}}\left(X_{2n} \left(\frac{2 \pi}{k} \right)\right) 
                        \equiv I \left(X_{2n} \left(\frac{2 \pi}{k} \right)\right) 
                      \ \ \ \ \ \ \ \ \ \ \ \  \left(\text{mod} \ \frac{1}{k}\right) \\
                        & \equiv I \left(X_{2n}( \pi) \right)
                          +\frac{1}{4 \pi^2}\int_{\frac{2 \pi}{k}}^{\pi} \beta \: d\alpha 
                      \ \ \ \ \ \ \ \ \ \ \ \ \  \ \ \ \ \ \ \  \left(\text{mod} \ \frac{1}{k}\right) \\
                        &   \equiv \frac{1}{2} \text{\textnormal{cs}}\left(L(8n+1,6n+1) \right) \\
&+\frac{1}{4 \pi^2}\int_{\frac{2 \pi}{k}}^{\alpha_0} Im \left(2*\log \left(-M^{-2}\frac{M^{-4}-M^{-2}+(2 M^{-2}+M^{2}-1)x+x^2}{M^{4}-M^{2}+(M^{-2}+2M^{2}-1)x+x^2}\right)\right) \: d\alpha \\
& +\frac{1}{4 \pi^2}\int_{\alpha_0}^{\pi}
 Im \left(\log \left(-M^{-2}\frac{M^{-4}-M^{-2}+(2 M^{-2}+M^{2}-1)x_1+x_1^2}{M^{4}-M^{2}+(M^{-2}+2M^{2}-1)x_1+x_1^2}\right) \right) \: d\alpha\\
 &+\frac{1}{4 \pi^2}\int_{\alpha_0}^{\pi} Im \left( \log \left(-M^{-2}\frac{M^{-4}-M^{-2}+(2 M^{-2}+M^{2}-1)x_2+x_2^2}{M^{4}-M^{2}+(M^{-2}+2M^{2}-1)x_2+x_2^2}\right)\right) \: d\alpha \\
& \left( \text{mod} \ \frac{1}{k}\ \text{if $k$ is even or }  \text{mod} \ \frac{1}{2k}\ \text{if $k$ is odd} \right)
\end{align*}
where the second equivalence comes from Theorem~\ref{theorem:schlafli} and the third equivalence comes from the fact that $I \left(X_{2n}(\pi)\right) \equiv \frac{1}{2} \text{\textnormal{cs}}\left(L(8n+1,6n+1) \right)$  $\left(\text{mod }\frac{1}{2}\right)$, Theorem~\ref{thm:longitude}, and geometric interpretations of hyperbolic and spherical holonomy representations.

The following theorem gives the Chern-Simons invariant of the Lens space $L(8n+1,6n+1)$.

\begin{theorem}(Theorem 1.3 of~\cite{HLM2}) \label{theorem:Lens}
\begin{align*}
\text{\textnormal{cs}} \left(L \left(8n+1,6n+1\right)\right) \equiv \frac{7n+3}{8n+1} && (\text{mod}\ 1).
\end{align*}
\end{theorem}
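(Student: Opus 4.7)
The plan is to compute $\text{cs}(L(8n+1, 6n+1))$ directly from the Chern-Simons construction of Section~\ref{sec:CSfunction}, using that every lens space is a free isometric quotient of the round sphere.

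First, I would realize $L(8n+1, 6n+1)$ as $S^3/\Z_{8n+1}$, where a generator of the cyclic group acts on $S^3 \subset \C^2$ by $(z_1, z_2) \mapsto (\zeta z_1, \zeta^{6n+1} z_2)$ with $\zeta = e^{2\pi i/(8n+1)}$. This action is free and isometric for the round metric, so the quotient inherits a smooth spherical structure to which the construction of $I(\cdot)$ applies. Because the lens space is closed with no distinguished singular locus, only the bulk term $\frac{1}{2}\int_\Gamma Q$ in the definition of $I(\cdot)$ contributes; the meridian-correction terms $\tau(s, \Gamma')$ and $\beta\alpha/(4\pi)$ are absent.

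Second, I would choose a left-invariant orthonormal frame on $S^3 \cong \mathrm{SU}(2)$ and descend it to $L(p,q)$. Because the Levi-Civita connection has constant sectional curvature $+1$, the connection $1$-forms $\theta_{ij}$ are Maurer--Cartan and the curvature $2$-forms $\Omega_{ij}$ are wedges of the dual coframe; the Chern-Simons $3$-form $Q$ therefore collapses to a scalar multiple of the volume form plus an exact piece whose integral encodes the twist parameter $q = 6n+1$. Integrating over a fundamental domain of spherical volume $2\pi^2/(8n+1)$ produces $\text{cs}(L(p,q))$ in the closed form $A/(8n+1) \pmod{1}$, where the numerator $A$ is determined by the modular inverse of $q$ in $\Z/(8n+1)$. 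Alternatively, at this point one could invoke the classical Kirk--Klassen or Meyerhoff closed-form expressions for $\text{cs}$ of spherical space forms and bypass direct integration.

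Third, I would do the arithmetic simplification. The key number-theoretic identity is $4(6n+1) = 3(8n+1) + 1$, so $(6n+1)^{-1} \equiv 4 \pmod{8n+1}$; substituting into the expression derived in the previous step and reducing modulo $1$ yields the stated value $(7n+3)/(8n+1)$. As a sanity check, at $n=0$ one gets $3/1 \equiv 0$, consistent with $\text{cs}(S^3) = 0$; at $n=1$ one gets $10/9 \equiv 1/9$.

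The main obstacle will be the reconciliation of sign and normalization conventions. The prefactor $1/(4\pi^2)$ in $Q$, the orientation convention on $L(p,q)$ inherited from $S^3$, and the precise way the chosen frame on the quotient lifts to $S^3$ all matter for producing the specific representative $(7n+3)/(8n+1)$ rather than an equivalent rational modulo $1$; this normalization bookkeeping is typically the most delicate part of such explicit geometric CS computations, and is what is developed in detail in \cite{HLM2}.
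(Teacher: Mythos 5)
The paper does not prove this statement at all: it is quoted verbatim as Theorem 1.3 of~\cite{HLM2}, so the ``proof'' in the paper is a citation, and your proposal is really an attempt to reprove an imported background result. Judged on its own terms, the attempt has a genuine gap at its central step. You propose to choose a left-invariant orthonormal frame on $S^3\cong \mathrm{SU}(2)$ and ``descend it to $L(p,q)$.'' But the deck transformation $(z_1,z_2)\mapsto(\zeta z_1,\zeta^{q}z_2)$ is a left \emph{and} right translation (it is $u\mapsto \lambda u\mu$ with $\lambda^2=\zeta^{q+1}$, $\mu^2=\zeta^{1-q}$ in quaternionic notation), and a left-invariant frame is not preserved by right translations; it descends only for $q\equiv\pm1$, i.e.\ for $L(p,\pm1)$. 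Worse, if the frame did descend, then $Q$ would be the pullback of a form on the quotient and $\int Q$ over the fundamental domain would be exactly $\tfrac{1}{p}\int_{S^3}Q$, which is independent of $q$ --- contradicting the very $q$-dependence (through $q^{-1}\bmod p$) that the formula exhibits. So the sentence claiming that $Q$ ``collapses to a scalar multiple of the volume form plus an exact piece whose integral encodes the twist parameter $q$'' is not a simplification you are entitled to; extracting the $q$-dependence from the failure of the frame to descend (or from a frame adapted to the genus-one Heegaard splitting, which is what~\cite{HLM2} actually does) is the entire content of the theorem.

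The final step is also unverifiable as written: you never state the closed form $A/(8n+1)$ into which you substitute, so there is nothing to check against $(7n+3)/(8n+1)$. The one concrete computation you do carry out, $4(6n+1)=3(8n+1)+1$ hence $(6n+1)^{-1}\equiv 4 \pmod{8n+1}$, is correct, and the appeal to Kirk--Klassen-type closed forms is a reasonable fallback, but as it stands the argument asserts the answer rather than deriving it. To make this a proof you would either have to carry out the fundamental-domain integration with a frame that genuinely descends (tracking the correction terms coming from the Hopf-fibration twisting), or simply quote the explicit lens-space formula from~\cite{HLM2} and specialize it to $p=8n+1$, $q=6n+1$, $q^{*}=4$ --- which is in effect what the paper does.
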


\subsection{Numerical computations}
Table~\ref{table1} gives the numerical approximation of $\alpha_0$ and the Chern-Simons invariant of $X_{2n}$ for $n$ between $1$ and $9$ and for $n$ between $-9$ and $-1$. We used Simpson's rule for the approximation with $2 \times10^4$ ($10^4$ in Simpson's rule) intervals from $0$ to $\alpha_0$ and $2 \times 10^4$ ($10^4$ in Simpson's rule) intervals from $\alpha_0$ to $\pi$. We used higher precision than the normal in Mathematica.
Table~\ref{table2-1} (resp. Table~\ref{table2-2}) gives the approximate Chern-Simons invariant of the hyperbolic orbifold, 
$\text{\textnormal{cs}} \left(X_{2n} (\frac{2 \pi}{k})\right)$ for $n$ between $1$ and $9$ (resp. for $n$ between $-9$ and $-1$) and for $k$ between $3$ and $10$, and of its cyclic covering, $\text{\textnormal{cs}} \left(M_k (X_{2n})\right)$. We used Simpson's rule for the approximation with $2 \times10^2$ ($10^2$ in Simpson's rule) intervals from $2 \pi/k$ to $\alpha_0$ and $2 \times 10^2$ ($10^2$ in Simpson's rule) intervals from $\alpha_0$ to $\pi$. 
We used Mathematica for the calculations. We record here that our data for the Chern-Simons invariant of $X_{2n}$ in Table~\ref{table1} and those obtained from  SnapPy match up up to existing decimal points.

\begin{table}
\caption{$\alpha_0$ for $n$ between $1$ and $9$ and for $n$ between $-9$ and $-1$.
\medskip}
 \centering
\vspace{3ex}
\begin{tabular}{ll} 
\noindent\(\begin{array}{|c|c|c|}
\hline
2n & \alpha_0 & \text{\textnormal{cs}} \left(X_{2n}\right)\\
\hline
 2 & 2.5741407781 & 0.155977 \text{} \\
 \hline
 4 & 2.8476422723 & 0. \text{} \\
 \hline
 6 & 2.9424657544 & 0.427829 \text{}  \\
 \hline
 8 & 2.9909391796  & 0.389237 \text{}  \\
 \hline
 10 & 3.0204096324 & 0.365487 \text{}  \\
 \hline
 12 & 3.0402286045 & 0.349444 \text{} \\
 \hline
 14 & 3.0544727854 & 0.337893 \text{}  \\
 \hline
 16 & 3.0652052902 & 0.329184 \text{}   \\
 \hline
 18 & 3.0735826570 & 0.322385 \text{}  \\
 \hline
\end{array}\)
&
\noindent\(\begin{array}{|c|c|c|}
\hline
2n & \alpha_0 & \text{\textnormal{cs}} \left(X_{2n}\right)\\
\hline
 -2 & 2.4071698136 & 0.346796 \text{}  \\
 \hline
 -4 & 2.8082099376 & 0.0217267 \text{}   \\
 \hline
 -6 & 2.9251055596 & 0.100298 \text{}  \\
 \hline
 -8 & 2.9812057191 & 0.141585 \text{}  \\
 \hline
 -10 & 3.0141894961 & 0.166665 \text{}  \\
 \hline
 -12 & 3.0359125478 & 0.183452 \text{}   \\
 \hline
 -14 & 3.0513033433 & 0.195458 \text{}  \\
 \hline
 -16 & 3.0627794492 & 0.204466 \text{}  \\
 \hline
 -18 & 3.0716663560 & 0.211471 \text{}   \\
 \hline
\end{array}\)
\end{tabular}
\label{table1}
\end{table}

\begin{table} \centering \footnotesize
\caption{Chern-Simons invariant of $X_{2n}$ and of the hyperbolic orbifold, $\text{\textnormal{cs}} \left(X_{2n} (\frac{2 \pi}{k})\right)$ for $n$ between $1$ and $9$ and for $k \left(=\frac{2 \pi}{\alpha}\right)$ between $3$ and $10$, and of its cyclic covering, 
$\text{\textnormal{cs}} \left(M_k (X_{2n})\right)$.
\medskip}
\vspace{3ex}
\begin{tabular}{ll}
\noindent\(\begin{array}{|c|c|c|}
\hline
 k & \text{\textnormal{cs}} \left(X_2(\alpha)\right) & \text{\textnormal{cs}} \left(M_k( X_2)\right) \\
\hline
 3 & 0.0791366 & 0.23741 \\
 4 & 0.105075 & 0.420301 \\
 5 & 0.0215424 & 0.107712 \\
 6 & 0.13151 & 0.789057 \\
 7 & 0.0663635 & 0.464545 \\
 8 & 0.0169609 & 0.135687 \\
 9 & 0.0337443 & 0.303699 \\
 10 & 0.0469426 & 0.469426 \\
\hline
\end{array}\)
&
\noindent\(\begin{array}{|c|c|c|}
\hline
 k &  \text{\textnormal{cs}} \left(X_4(\alpha)\right) & \text{\textnormal{cs}} \left(M_k (X_4)\right)\\
\hline
3 & 0. & 0. \\
 4 & 0. & 0. \\
 5 & 0. & 0. \\
 6 & 0. & 0. \\
 7 & 0. & 0. \\
 8 & 0. & 0. \\
 9 & 0. & 0. \\
 10 & 0. & 0. \\
\hline
\end{array}\)
\end{tabular}

\bigskip

\begin{tabular}{ll}
\noindent\(\begin{array}{|c|c|c|}
\hline
 k &   \text{\textnormal{cs}} \left(X_6(\alpha)\right) & \text{\textnormal{cs}} \left(M_k (X_6)\right) \\
\hline
 3 & 0.125912 & 0.377736 \\
 4 & 0.192764 & 0.771058 \\
 5 & 0.0360431 & 0.180216 \\
 6 & 0.0996796 & 0.598077 \\
 7 & 0.00284328 & 0.0199029 \\
 8 & 0.0554674 & 0.443739 \\
 9 & 0.0409685 & 0.368717 \\
 10 & 0.0294401 & 0.294401 \\
\hline
\end{array}\)
&
\noindent\(\begin{array}{|c|c|c|}
\hline
 k &  \text{\textnormal{cs}} \left(X_{8} (\alpha)\right) & \text{\textnormal{cs}} \left( M_k (X_{8})\right) \\
\hline
 3 & 0.098074 & 0.294222 \\
 4 & 0.157843 & 0.631371 \\
 5 & 0.0993608 & 0.496804 \\
 6 & 0.0622858 & 0.373715 \\
 7 & 0.0365103 & 0.255572 \\
 8 & 0.0174882 & 0.139906 \\
 9 & 0.00284881 & 0.0256393 \\
 10 & 0.091224 & 0.91224 \\
\hline
\end{array}\)
\end{tabular}

\bigskip

\begin{tabular}{ll}
\noindent\(\begin{array}{|c|c|c|}
\hline
 k &  \text{\textnormal{cs}} \left(X_{10} (\alpha)\right) & \text{\textnormal{cs}} \left(M_k (X_{10})\right) \\
\hline
 3 & 0.0781956 & 0.234587 \\
 4 & 0.135356 & 0.541426 \\
 5 & 0.0762905 & 0.381452 \\
 6 & 0.03897 & 0.23382 \\
 7 & 0.0130642 & 0.0914491 \\
 8 & 0.118964 & 0.951709 \\
 9 & 0.0348287 & 0.313458 \\
 10 & 0.067613 & 0.67613 \\
\hline
\end{array}\)
&
\noindent\(\begin{array}{|c|c|c|}
\hline
 k &  \text{\textnormal{cs}} \left(X_{12} (\alpha)\right) & \text{\textnormal{cs}} \left(M_k (X_{12})\right) \\
\hline
 3 & 0.0637865 & 0.191359 \\
 4 & 0.119879 & 0.479518 \\
 5 & 0.0605612 & 0.302806 \\
 6 & 0.0231302 & 0.138781 \\
 7 & 0.068593 & 0.480151 \\
 8 & 0.103028 & 0.82422 \\
 9 & 0.0188687 & 0.169818 \\
 10 & 0.0516364 & 0.516364 \\
\hline
\end{array}\)
\end{tabular}

\bigskip

\begin{tabular}{ll}
\noindent\(\begin{array}{|c|c|c|}
\hline
 k &  \text{\textnormal{cs}} \left(X_{14} (\alpha)\right) & \text{\textnormal{cs}} \left(M_k (X_{14})\right) \\
\hline
 3 & 0.053045 & 0.159135 \\
 4 & 0.108627 & 0.434509 \\
 5 & 0.0491795 & 0.245897 \\
 6 & 0.0116899 & 0.0701393 \\
 7 & 0.0571206 & 0.399844 \\
 8 & 0.0915354 & 0.732283 \\
 9 & 0.00736345 & 0.066271 \\
 10 & 0.0401222 & 0.401222 \\
\hline
\end{array}\)
&
\noindent\(\begin{array}{|c|c|c|}
\hline
 k &  \text{\textnormal{cs}} \left(X_{16} (\alpha)\right) & \text{\textnormal{cs}} \left(M_k (X_{16})\right) \\
\hline
 3 & 0.044788 & 0.134364 \\
 4 & 0.100093 & 0.400374 \\
 5 & 0.0405713 & 0.202857 \\
 6 & 0.00304774 & 0.0182864 \\
 7 & 0.0484584 & 0.339209 \\
 8 & 0.0828601 & 0.662881 \\
 9 & 0.054236 & 0.488124 \\
 10 & 0.0314349 & 0.314349 \\
\hline
\end{array}\)
\end{tabular}

\bigskip

\noindent\(\begin{array}{|c|c|c|}
\hline
 k &  \text{\textnormal{cs}} \left(X_{18} (\alpha)\right) & \text{\textnormal{cs}} \left(M_k (X_{18})\right) \\
\hline
3 & 0.0382642 & 0.114793 \\
 4 & 0.0934054 & 0.373621 \\
 5 & 0.033837 & 0.169185 \\
 6 & 0.162945 & 0.977669 \\
 7 & 0.0416815 & 0.29177 \\
 8 & 0.0760594 & 0.608475 \\
 9 & 0.0474461 & 0.427015 \\
 10 & 0.0246406 & 0.246406 \\
 \hline
\end{array}\)
\label{table2-1}
\end{table}
\begin{table} \centering \footnotesize
\caption{Chern-Simons invariant of $X_{2n}$ and of the hyperbolic orbifold, $\text{\textnormal{cs}} \left(X_{2n} (\frac{2 \pi}{k})\right)$ for $n$ between $-9$ and $-1$ and for $k \left(=\frac{2 \pi}{\alpha}\right)$ between $3$ and $10$, and of its cyclic covering, $\text{\textnormal{cs}} \left(M_k (X_{2n})\right)$.
\medskip}
\vspace{3ex}
\begin{tabular}{ll}
\noindent\(\begin{array}{|c|c|c|}
\hline
 k & \text{\textnormal{cs}} \left(X_{-2}(\alpha)\right) & \text{\textnormal{cs}} \left(M_k( X_{-2})\right) \\
\hline
 3 & 0.0200144 & 0.0600431 \\
 4 & 0.186811 & 0.747246 \\
 5 & 0.00166667 & 0.00833333 \\
 6 & 0.0504622 & 0.302773 \\
 7 & 0.0163442 & 0.11441 \\
 8 & 0.11699 & 0.935921 \\
 9 & 0.0292902 & 0.263612 \\
 10 & 0.0595432 & 0.595432 \\
\hline
\end{array}\)
&
\noindent\(\begin{array}{|c|c|c|}
\hline
 k &  \text{\textnormal{cs}} \left(X_{-4}(\alpha)\right) & \text{\textnormal{cs}} \left(M_k (X_{-4})\right)\\
\hline
3 & 0.12215 & 0.366451 \\
 4 & 0.0625 & 0.25 \\
 5 & 0.0428241 & 0.21412 \\
 6 & 0.0345888 & 0.207533 \\
 7 & 0.0304384 & 0.213069 \\
 8 & 0.0280513 & 0.22441 \\
 9 & 0.0265452 & 0.238907 \\
 10 & 0.0255297 & 0.255297 \\
\hline
\end{array}\)
\end{tabular}

\bigskip

\begin{tabular}{ll}
\noindent\(\begin{array}{|c|c|c|}
\hline
 k &   \text{\textnormal{cs}} \left(X_{-6}(\alpha)\right) & \text{\textnormal{cs}} \left(M_k (X_{-6})\right) \\
\hline
 3 & 0.0009078 & 0.0027234 \\
 4 & 0.125712 & 0.502846 \\
 5 & 0.0135112 & 0.067556 \\
 6 & 0.108473 & 0.650837 \\
 7 & 0.0344724 & 0.241307 \\
 8 & 0.1044 & 0.835201 \\
 9 & 0.0478866 & 0.430979 \\
 10 & 0.00279032 & 0.0279032 \\
\hline
\end{array}\)
&
\noindent\(\begin{array}{|c|c|c|}
\hline
 k &  \text{\textnormal{cs}} \left(X_{-8} (\alpha)\right) & \text{\textnormal{cs}} \left( M_k (X_{-8})\right) \\
\hline
 3 & 0.0310079 & 0.0930237 \\
 4 & 0.163984 & 0.655935 \\
 5 & 0.0535467 & 0.267733 \\
 6 & 0.149078 & 0.894467 \\
 7 & 0.00389753 & 0.0272827 \\
 8 & 0.0203848 & 0.163079 \\
 9 & 0.0333937 & 0.300543 \\
 10 & 0.0439034 & 0.439034 \\
\hline
\end{array}\)
\end{tabular}

\bigskip

\begin{tabular}{ll}
\noindent\(\begin{array}{|c|c|c|}
\hline
 k &  \text{\textnormal{cs}} \left(X_{-10} (\alpha)\right) & \text{\textnormal{cs}} \left(M_k (X_{-10})\right) \\
\hline
 3 & 0.0524295 & 0.157288 \\
 4 & 0.188321 & 0.753285 \\
 5 & 0.078347 & 0.391735 \\
 6 & 0.0073491 & 0.0440946 \\
 7 & 0.0288918 & 0.202242 \\
 8 & 0.0454073 & 0.363258 \\
 9 & 0.00287658 & 0.0258892 \\
 10 & 0.068952 & 0.68952 \\
\hline
\end{array}\)
&
\noindent\(\begin{array}{|c|c|c|}
\hline
 k &  \text{\textnormal{cs}} \left(X_{-12} (\alpha)\right) & \text{\textnormal{cs}} \left(M_k (X_{-12})\right) \\
\hline
 3 & 0.0678857 & 0.203657 \\
 4 & 0.204874 & 0.819494 \\
 5 & 0.0950581 & 0.47529 \\
 6 & 0.0241032 & 0.144619 \\
 7 & 0.0456616 & 0.319631 \\
 8 & 0.062184 & 0.497472 \\
 9 & 0.0196569 & 0.176912 \\
 10 & 0.0857343 & 0.857343 \\
\hline
\end{array}\)
\end{tabular}

\bigskip

\begin{tabular}{ll}
\noindent\(\begin{array}{|c|c|c|}
\hline
 k &  \text{\textnormal{cs}} \left(X_{-14} (\alpha)\right) & \text{\textnormal{cs}} \left(M_k (X_{-14})\right) \\
\hline
 3 & 0.0793285 & 0.237985 \\
 4 & 0.216795 & 0.867182 \\
 5 & 0.00704512 & 0.0352256 \\
 6 & 0.0361054 & 0.216632 \\
 7 & 0.0576682 & 0.403677 \\
 8 & 0.074192 & 0.593536 \\
 9 & 0.0316653 & 0.284987 \\
 10 & 0.0977426 & 0.977426 \\
\hline
\end{array}\)
&
\noindent\(\begin{array}{|c|c|c|}
\hline
 k &  \text{\textnormal{cs}} \left(X_{-16} (\alpha)\right) & \text{\textnormal{cs}} \left(M_k (X_{-16})\right) \\
\hline
 3 & 0.088066 & 0.264198 \\
 4 & 0.225772 & 0.903087 \\
 5 & 0.0160513 & 0.0802563 \\
 6 & 0.0451171 & 0.270702 \\
 7 & 0.0666807 & 0.466765 \\
 8 & 0.0832028 & 0.665622 \\
 9 & 0.0406762 & 0.366085 \\
 10 & 0.00675282 & 0.0675282 \\
\hline
\end{array}\)
\end{tabular}

\bigskip

\noindent\(\begin{array}{|c|c|c|}
\hline
 k &  \text{\textnormal{cs}} \left(X_{-18} (\alpha)\right) & \text{\textnormal{cs}} \left(M_k (X_{-18})\right) \\
\hline
3 & 0.0949296 & 0.284789 \\
 4 & 0.232766 & 0.931066 \\
 5 & 0.0230607 & 0.115304 \\
 6 & 0.052117 & 0.312702 \\
 7 & 0.00225322 & 0.0157726 \\
 8 & 0.0901914 & 0.721531 \\
 9 & 0.0476676 & 0.429008 \\
 10 & 0.0137386 & 0.137386 \\
 \hline
\end{array}\)
\label{table2-2}
\end{table}

\end{document}